\numberwithin{equation}{section}
\theoremstyle{plain}
\newtheorem{theorem}{Theorem}
\newtheorem{corollary}{Corollary}
\newtheorem{proposition}{Proposition}
\newtheorem{remark}{Remark}
\newtheorem{lemma}{Lemma}
\newtheorem{exa}{Example}
\newtheorem{thm}{Theorem}
\newtheorem{cor}[thm]{Corollary}
\theoremstyle{definition}
\newcommand{\Q}{\mathbb{Q}}
\newcommand{\R}{\mathbb{R}}
\newcommand{\Z}{\mathbb{Z}}
\newcommand{\C}{\mathbb{C}}
\newcommand{\id}{\textnormal{id}}
\DeclareMathOperator{\Sw}{SW}
\DeclareMathOperator{\Sp}{Spin}
\DeclareMathOperator{\Ks}{KS}
\begin{document}

\author{Rafael Torres}

\title[Knotted and TOP isotopic 2-spheres with cyclic 2-knot group]{Topologically isotopic and smoothly inequivalent 2-spheres in simply connected 4-manifolds whose complement has a prescribed  fundamental group.}

\address{Scuola Internazionale Superiori di Studi Avanzati (SISSA)\\ Via Bonomea 265\\34136\\Trieste\\Italy}

\email{rtorres@sissa.it}

\subjclass[2020]{Primary 57K45, 57R55; Secondary 57R40, 57R52}

\maketitle

\emph{Abstract}: We describe a procedure to construct infinite sets of pairwise smoothly inequivalent 2-spheres in simply connected 4-manifolds, which are topologically isotopic and whose complement has a prescribed fundamental group that satisfies some conditions. This class of groups include cyclic groups and the binary icosahedral group. These are the first known examples of such exotic embeddings of 2-spheres in 4-manifolds. Examples of locally flat embedded 2-spheres in a non-smoothable 4-manifold whose complements are homotopy equivalent to smoothly embedded ones are also given.

\section{Main results}\label{Introduction}

The first main result of this note is the following theorem.

\begin{thm}\label{Theorem A}Fix $p\geq 2$. There is an infinite set\begin{center}$\{S_{n, p}: n\in \Z\}$\end{center}of smoothly embedded 2-spheres in $2\mathbb{CP}^2\#4\overline{\mathbb{CP}^2}$ that satisfies the following properties.
\begin{itemize}
\item Any two elements are topologically isotopic.
\item There is a diffeomorphism of pairs\begin{center}$(2\mathbb{CP}^2\#4\overline{\mathbb{CP}^2}, S_{n_1, p})\rightarrow (2\mathbb{CP}^2\#4\overline{\mathbb{CP}^2}, S_{n_2, p})$\end{center} if and only if $n_1 = n_2$.
\item The fundamental group of the complement is\begin{center}$\pi_1(2\mathbb{CP}^2\#4\overline{\mathbb{CP}^2}\setminus \nu(S_{n, p})) = \Z/p$\end{center} for every $n\in \Z$.
\item $[S_{n, p}]\neq 0\in H_2(2\mathbb{CP}^2\#4\overline{\mathbb{CP}^2}; \Z)$ for every $n\in \Z$.
\item Surgery along each of these 2-spheres yields an infinite set of pairwise homeomorphic and pairwise non-diffeomorphic closed smooth 4-manifolds with fundamental group $\Z/p$.
\end{itemize}
\end{thm}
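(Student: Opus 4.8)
Write $X = 2\mathbb{CP}^2\#4\overline{\mathbb{CP}^2}$. The plan is to realize the spheres $S_{n,p}$ as cocore $2$-spheres of a single circle surgery applied to an infinite exotic family, so that the listed properties are inherited from that family. First I would construct a closed symplectic $4$-manifold $Z_0$ with $\pi_1(Z_0) = \Z/p$, $\chi(Z_0) = 6$ and $\sigma(Z_0) = -2$, containing an embedded circle $\gamma$ that normally generates $\pi_1$ and whose surgery with the canonical framing is diffeomorphic to $X$; equivalently, $Z_0$ is obtained by surgering a square-zero $2$-sphere $S_{0,p}\subset X$ whose homology class is $p$-divisible and whose complement has fundamental group $\Z/p$, with $\gamma$ the dual circle created by that surgery. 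Disjoint from $\gamma$ I would fix a square-zero symplectic torus $T$ lying in a cusp neighborhood --- so that knot surgery along $T$ preserves the fundamental group --- and positioned so that, after the $\gamma$-surgery, the image torus $T' \subset X$ is \emph{inert}, meaning knot surgery along $T'$ returns $X$ up to diffeomorphism. Choosing knots $K_n$, $n\in\Z$, with sufficiently independent Alexander polynomials, set $Z_n := (Z_0)_{T,K_n}$, $X_n := \bigl(Z_n\setminus\nu(\gamma)\bigr)\cup(D^2\times S^2)$, and $S_{n,p} := \{0\}\times S^2\subset X_n$.

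Granting such $(Z_0,\gamma,T)$, most of the statement is formal. Since $\gamma$ has codimension $3$, deleting it does not change $\pi_1$, so $\pi_1\bigl(Z_n\setminus\nu(\gamma)\bigr) = \pi_1(Z_n) = \Z/p$; since $\gamma$ normally generates $\pi_1(Z_n)$ and the surgery kills it, van Kampen's theorem gives $\pi_1(X_n) = 1$. Knot surgery along $T$ is supported away from $\nu(\gamma)$, hence commutes with the $\gamma$-surgery, so $X_n = X_{T',K_n}$; as $T'$ is inert, $X_n \cong X$ for every $n$, and we henceforth regard all the $S_{n,p}$ as embedded in the fixed manifold $X$, with identifications chosen so that $[S_{n,p}]$ is one fixed class. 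By construction $X\setminus\nu(S_{n,p}) = Z_n\setminus\nu(\gamma)$, so the complement has fundamental group $\Z/p$; the exact sequence of the pair $(X, X\setminus\nu(S_{n,p}))$ shows that $H_1$ of the complement is the cokernel of intersection with $[S_{n,p}]$, a homomorphism $H_2(X;\Z)\to\Z$, so this cokernel equals $\Z/p$, forcing $[S_{n,p}] = p\beta$ with $\beta$ primitive --- in particular $[S_{n,p}]\neq 0$ --- while $[S_{n,p}]^2 = 0$ because the normal bundle is trivial. Lastly, surgery along $S_{n,p}$ reverses the $\gamma$-surgery and returns $Z_n$; the $Z_n$ are pairwise homeomorphic, sharing $\pi_1 = \Z/p$, $\chi$, $\sigma$, type, Kirby--Siebenmann invariant and the equivariant intersection form of the universal cover, so Freedman's classification for the good group $\Z/p$ applies; and they are pairwise non-diffeomorphic, since their simply connected $p$-fold covers $\widetilde{Z_n}$ have $b^+ = 2p-1 \geq 3$, are symplectic, and are distinguished by their Seiberg--Witten invariants via the knot surgery formula along the $p$ lifts of $T$, so a diffeomorphism $Z_{n_1}\cong Z_{n_2}$ would lift to a Seiberg--Witten-preserving diffeomorphism of the covers and is impossible.

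For topological isotopy, fix $n_1\neq n_2$. The complements $X\setminus\nu(S_{n_i,p}) = Z_{n_i}\setminus\nu(\gamma)$ are compact $4$-manifolds with boundary $S^1\times S^2$, fundamental group $\Z/p$, and identical equivariant intersection forms and Kirby--Siebenmann invariants, since the knot surgery in the cusp neighborhood changes none of these; by the relative version of Freedman's classification, valid because $\Z/p$ is good, they are homeomorphic via a map that is the identity on $S^1\times S^2$, in particular respecting the meridians of the $S_{n_i,p}$. Since also $[S_{n_1,p}] = [S_{n_2,p}]$, Sunukjian's topological isotopy theorem --- together with the classification of locally flat embedded $2$-spheres with prescribed complement in a simply connected $4$-manifold (Lee--Wilczy\'nski) --- shows that $S_{n_1,p}$ and $S_{n_2,p}$ are topologically ambiently isotopic in $X$.

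For smooth inequivalence, suppose $f$ is a diffeomorphism of pairs with $n_1\neq n_2$. After an isotopy normalizing tubular neighborhoods, $f$ carries $\nu(S_{n_1,p})$ onto $\nu(S_{n_2,p})$ and restricts to a diffeomorphism $\bar f\colon Z_{n_1}\setminus\nu(\gamma)\to Z_{n_2}\setminus\nu(\gamma)$; as $f$ preserves the $2$-spheres and their trivial normal bundles and $\pi_2(\Or(2)) = 0$, the restriction of $\bar f$ to the boundary $S^1\times S^2$ is isotopic to a product of a diffeomorphism of $S^2$ with one of $S^1$, which extends over the reglued handle $\nu(\gamma) = S^1\times D^3$ since every diffeomorphism of $S^2$ extends over $D^3$. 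Hence $\bar f$ extends to a diffeomorphism $Z_{n_1}\cong Z_{n_2}$, contradicting the previous paragraph; the reverse implication being trivial, the pairs are diffeomorphic exactly when $n_1 = n_2$. The genuine difficulty is the construction underlying the first paragraph: one must exhibit $S_{0,p}\subset X$ with $p$-divisible class and complement $\Z/p$ whose surgered manifold $Z_0$ is symplectic, and then place the cusp torus $T$ so that knot surgery on it is effective in $Z_0$ --- making the $Z_n$ mutually non-diffeomorphic --- yet inert after the $\gamma$-surgery --- keeping the ambient manifold the standard $X$. This dissolving, i.e.\ the identification $X_n\cong X$ for all $n$, is the crux, and would be proved by Kirby calculus exploiting that $X$ is completely decomposable and that $T$ sits in a summand of $X$ on which knot surgery is trivial.
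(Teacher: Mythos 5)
Your formal bookkeeping (the $\pi_1$ computations for the complement and for the surgered manifold, the cokernel argument giving $[S_{n,p}]=p\beta\neq 0$, and the reduction of smooth inequivalence of pairs to non-diffeomorphism of the manifolds obtained by surgery on the spheres) is correct and matches the paper in spirit. But the proposal has a genuine gap exactly where you locate it yourself: the existence of the triple $(Z_0,\gamma,T)$ with \emph{all} of the required properties --- $Z_0$ symplectic with $\pi_1=\Z/p$, $\chi=6$, $\sigma=-2$; a cusp-embedded torus $T$ on which knot surgery is effective in $Z_0$; and, crucially, the image torus $T'$ being \emph{inert} in $2\mathbb{CP}^2\#4\overline{\mathbb{CP}^2}$ so that every $X_n$ dissolves to the standard manifold --- is asserted, not proved, and deferred to unspecified Kirby calculus. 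This is not a routine verification: it is the entire content of the theorem, and it is far from clear that a torus can be simultaneously effective upstairs and inert downstairs in the way you need. The paper sidesteps this by a different construction: it takes Fintushel--Stern's \emph{nullhomologous} torus $T\subset\mathbb{CP}^2\#3\overline{\mathbb{CP}^2}$ and replaces their $1/n$-surgeries by $p/n$-surgeries, producing the exotic family $X_{T,S^1_b}(p/n)$ with $\pi_1=\Z/p$ distinguished directly by the Morgan--Mrowka--Szab\'o product formula (no passage to covers, no symplectic structure on the $\Z/p$-manifolds themselves), and then identifies the ambient manifold after the circle surgery with $2\mathbb{CP}^2\#4\overline{\mathbb{CP}^2}$ via Moishezon's lemma together with Baykur--Sunukjian's result that these manifolds dissolve after one stabilization by $S^2\times S^2$. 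That last ingredient is precisely the replacement for your unproved inertness claim.

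A secondary, smaller gap: your topological isotopy argument routes through a ``relative version of Freedman's classification'' of the complements, which requires you to verify that the complements $Z_{n_i}\setminus\nu(\gamma)$ have isomorphic equivariant intersection forms rel boundary; you assert this changes ``none of these'' under knot surgery, but that is exactly the nontrivial point and is not justified. The paper instead quotes the Lee--Wilczy\'nski/Hambleton--Kreck isotopy theorem, which needs only that the two locally flat spheres represent the same class of non-zero divisibility $p$, have complements with $\pi_1=\Z/p$, and that $b_2$ of the ambient manifold is large enough relative to $\sigma$ and $h\cdot h$ --- all of which are immediate here. I would recommend invoking that theorem directly rather than reproving a relative classification.
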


Theorem \ref{Theorem A} provides the first known example of an infinite set of 2-spheres smoothly embedded in a simply connected 4-manifold that are pairwise topologically isotopic, pairwise smoothly inequivalent and having a complement with finite cyclic fundamental group. Schwartz \cite[Theorem 2]{[Schwartz]} pointed out the existence of closed simply connected 4-manifolds containing pairs of smoothly embedded 2-spheres that are both smoothly equivalent and topologically isotopic, but not smoothly isotopic. Examples of these exotic embeddings of 2-spheres in closed 4-manifolds with simply connected complement have been constructed by Akbulut \cite{[Akbulut2]} and Auckly-Kim-Melvin-Ruberman \cite{[AuckleyKimMelvinRuberman]}. Exotic embeddings of surfaces with positive genus in simply connected 4-manifolds and complement having non-trivial fundamental group are found in Kim \cite{[Kim]} and Kim-Ruberman \cite{[KimRuberman]}. An ingredient in the proof of Theorem \ref{Theorem A} is of independent interest: we point out in Theorem \ref{Theorem Infinite Smooth Structures} that constructions of inequivalent smooth structures on simply connected 4-manifolds of Fintushel-Stern \cite{[FintushelStern2], [FintushelStern]} can be extended to produce such structures on 4-manifolds with non-trivial fundamental group too. 

The second main result provides a construction procedure of topologically equivalent yet smoothly inequivalent homologically essential 2-spheres whose complement can be chosen to have the same fundamental group as a wide range of $\Q$-homology 4-spheres. We work with the modified Seiberg-Witten $\Sw'_X$ invariant of a closed 4-manifold $X$ as defined, for example, in \cite[Section 2]{[FintushelParkStern]}, and denote by $\mathcal{B}_X$ the set of basic classes.


\begin{thm}\label{Theorem B}Let $\{Z_n: n\in \Z\}$ be an infinite set of closed smooth simply connected 4-manifolds with pairwise different integer invariants\begin{equation}\label{Integer Invariant}S_n = \max\{|\Sw'_{Z_n}(k_{Z_n})| :k_{Z_n}\in \mathcal{B}_{Z_n}\},\end{equation} which are pairwise homeomorphic to a given closed 4-manifold $Z$ and such that the connected sum $Z_n\#S^2\times S^2$ is diffeomorphic to $Z\# S^2\times S^2$ for every $n\in \Z$. Let $M$ be a closed smooth 4-manifold with $H_\ast(M; \Q) \cong H_\ast(S^4; \Q)$ and set $\pi:=\pi_1M$. Suppose that there is a loop $\alpha\subset M$ and a choice of framing such that\begin{equation}\label{Stabilization}S^2\times S^2 = M\setminus \nu(\alpha)\cup D^2\times S^2.\end{equation}

There is an infinite set\begin{center}$\{S_{n, \pi}: n\in \Z\}$\end{center}of smoothly embedded 2-spheres in $Z\#S^2\times S^2$ that satisfies the following properties.
\begin{itemize}
\item There is a homeomorphism of pairs\begin{center}$(Z\#S^2\times S^2, S_{n_1, \pi})\rightarrow (Z\#S^2\times S^2, S_{n_2, \pi})$\end{center} for every $n_i\in \Z$.
\item There is a diffeomorphism of pairs\begin{center}$(Z\#S^2\times S^2, S_{n_1, \pi})\rightarrow (Z\#S^2\times S^2, S_{n_2, \pi})$\end{center} if and only if $n_1 = n_2$.
\item The fundamental group of the complement is\begin{center}$\pi_1(Z\#S^2\times S^2\setminus \nu(S_{n, \pi})) = \pi$\end{center} and its homology class satisfies\begin{center}$[S_{n, \pi}]\neq 0\in H_2(Z\# S^2\times S^2; \Z)$\end{center} for every $n\in \Z$.
\item Surgery along each of these 2-spheres yields an infinite set $\{Z_n\# M: n\in \Z\}$ of pairwise non-diffeomorphic closed smooth 4-manifolds with fundamental group $\pi$, and that are pairwise homeomorphic to the connected sum $Z\#M$.
\end{itemize}
\end{thm}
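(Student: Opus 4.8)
The plan is to realise each $S_{n,\pi}$ as the $2$-sphere dual to a circle surgery, pushed into the fixed ambient manifold $Z\#S^2\times S^2$ by the hypothesised stabilising diffeomorphisms. View $\alpha$ as a loop in the $M$-summand of $Z_n\#M$, the connected sum being performed away from $\nu(\alpha)$. By \eqref{Stabilization}, surgery on $\alpha$ inside $M$ replaces $\nu(\alpha)=S^1\times D^3$ by $D^2\times S^2$ and produces $S^2\times S^2$; hence the same surgery inside $Z_n\#M$ produces $Z_n\#S^2\times S^2$, which by hypothesis is diffeomorphic to $Z\#S^2\times S^2$ via some $\phi_n$. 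Let $S_0:=\{0\}\times S^2\subset D^2\times S^2$ be the core sphere of the glued-in piece; it has trivial normal bundle, its complement inside $Z_n\#S^2\times S^2$ is $Z_n\#(M\setminus\nu(\alpha))$, and surgery along $S_0$ (undoing the surgery on $\alpha$) gives back $Z_n\#M$. Since $S_0$ is built only from $M$, $\alpha$ and the $S^2\times S^2$ factor, it is literally the same submanifold for every $n$; only the diffeomorphisms $\phi_n$ differ. One then sets $S_{n,\pi}:=\phi_n(S_0)\subset Z\#S^2\times S^2$.

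With this construction in place, the homotopy-theoretic assertions are essentially formal. As $\phi_n$ carries a tubular neighbourhood of $S_0$ onto one of $S_{n,\pi}$, the complement of $S_{n,\pi}$ is diffeomorphic to $Z_n\#(M\setminus\nu(\alpha))$, so its fundamental group is $\pi_1(M)=\pi$. A Mayer--Vietoris computation for $Z_n\#S^2\times S^2=\bigl(Z_n\#(M\setminus\nu(\alpha))\bigr)\cup(D^2\times S^2)$ shows $[S_0]$ is a nonzero square-zero class --- equivalently, the surgery undoing $\alpha$ drops $b_2$ by two --- so $[S_{n,\pi}]=\phi_{n\ast}[S_0]\neq 0$ in $H_2(Z\#S^2\times S^2;\Z)$. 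Transporting the surgery along $\phi_n$, surgery along $S_{n,\pi}$ yields exactly $Z_n\#M$, and $Z_n\#M$ is homeomorphic to $Z\#M$ because $Z_n$ is homeomorphic to $Z$.

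For the topological equivalence of the pairs, take a homeomorphism $g_n\colon Z_n\to Z$ from the hypothesis and form the connected sum with $S^2\times S^2$ in a region disjoint from $S_0$; then $g_n\#\id$ is a homeomorphism of pairs $(Z_n\#S^2\times S^2,S_0)\to(Z\#S^2\times S^2,S_0)$, and composing with $\phi_n^{-1}$ produces a homeomorphism of pairs $(Z\#S^2\times S^2,S_{n,\pi})\to(Z\#S^2\times S^2,S_0)$; in particular any two of the pairs are homeomorphic. For the smooth side, a diffeomorphism of pairs $(Z\#S^2\times S^2,S_{n_1,\pi})\to(Z\#S^2\times S^2,S_{n_2,\pi})$ carries a tubular neighbourhood of $S_{n_1,\pi}$ onto one of $S_{n_2,\pi}$; since any bundle isomorphism between trivial $D^2$-bundles over $S^2$ is, up to isotopy, a product, it respects the surgery framings and hence descends to a diffeomorphism $Z_{n_1}\#M\cong Z_{n_2}\#M$. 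It thus remains to prove the last bullet: that the manifolds $\{Z_n\#M:n\in\Z\}$ are pairwise non-diffeomorphic.

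This is the crux, and the step I expect to be the main obstacle. Because $M$ is a $\Q$-homology $4$-sphere we have $b_2^+(M)=0$, so the connected-sum vanishing of Seiberg--Witten invariants does not apply; instead I would invoke Theorem \ref{Theorem Infinite Smooth Structures}, which extends the Fintushel--Stern constructions to the non-simply connected setting, to conclude that the modified invariant $\Sw'$ of $Z_n\#M$ still distinguishes these manifolds --- concretely, that the integer $\max\{|\Sw'_{Z_n\#M}(k)|:k\in\mathcal{B}_{Z_n\#M}\}$ is controlled by $S_n$ from \eqref{Integer Invariant}. Since the $S_n$ are pairwise distinct, so are these integers, which gives the pairwise non-diffeomorphism of the $Z_n\#M$; combined with the preceding paragraph this yields the asserted smooth inequivalence of the pairs, while their pairwise homeomorphism to $Z\#M$ is immediate from $Z_n\cong_{\mathrm{top}}Z$. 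Essentially all the work lies in showing that the Fintushel--Stern-type integer invariant survives the connected sum with $M$ and the passage to fundamental group $\pi$, which is exactly what Theorem \ref{Theorem Infinite Smooth Structures} is designed to deliver.
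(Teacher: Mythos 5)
Your construction of the spheres and all the formal deductions match the paper's argument: you perform surgery on $\alpha$ inside the $M$-summand of $Z_n\#M$, transport the belt sphere by the stabilizing diffeomorphism $\phi_n$, identify the complement with $Z_n\#(M\setminus\nu(\alpha))$ to get $\pi_1=\pi$, verify $[S_{n,\pi}]\neq 0$ by Mayer--Vietoris, obtain the homeomorphism of pairs from $g_n\#\id$, and reduce smooth inequivalence of the pairs to the pairwise non-diffeomorphism of the surgered manifolds $Z_n\#M$. The gap is precisely in the step you yourself flag as the crux. You propose to establish that the $Z_n\#M$ are pairwise non-diffeomorphic by ``invoking Theorem \ref{Theorem Infinite Smooth Structures}.'' That theorem concerns a specific family obtained by $(p/n)$-torus surgery on a nullhomologous torus in $\mathbb{CP}^2\#3\overline{\mathbb{CP}^2}$; it is not a statement about connected sums $Z_n\#M$ for arbitrary $Z_n$ and an arbitrary $\Q$-homology $4$-sphere $M$ (indeed the paper stresses that the manifolds of Theorem \ref{Theorem Infinite Smooth Structures} are not smoothly reducible, whereas $Z_n\#M$ is), and it says nothing about fundamental groups such as the binary icosahedral group. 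So the decisive step is attributed to a result that does not apply, and nothing is actually proved there.

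The tool that does the job --- and the one the paper uses --- is the gluing result of Kotschick--Morgan--Taubes \cite[Proof of Proposition 2]{[KotschickMorganTaubes]}: because $b_1(M)=0=b_2^+(M)$, a $\mathrm{Spin}^{\C}$-structure on $Z_n$ with $\Sw_{Z_n}\neq 0$ extends over the connected sum with $\Sw_{Z_n\#M}=\Sw_{Z_n}$, and passing to the modified invariant $\Sw'$ (which is exactly what the hypothesis (\ref{Integer Invariant}) is phrased in terms of, and which absorbs the ambiguity coming from the torsion of $H^2(Z_n\#M;\Z)$ introduced by $M$) shows that the integer $S_n$ is a diffeomorphism invariant of $Z_n\#M$. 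Since the $S_n$ are pairwise distinct by hypothesis, the $Z_n\#M$ are pairwise non-diffeomorphic. With this substitution for your appeal to Theorem \ref{Theorem Infinite Smooth Structures}, your argument closes and coincides with the paper's proof.
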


Please see \cite[Proof of Theorem 1]{[FintushelParkStern]} for details on the definition of the invariant (\ref{Integer Invariant}). Fintushel-Stern constructed infinite sets as in the hypothesis of Theorem \ref{Theorem B} for $Z = \mathbb{CP}^2\#k\overline{\mathbb{CP}^2}$ for $2\leq k \leq 7$ in \cite{[FintushelStern2], [FintushelStern]}. Baykur-Sunukian \cite{[BaykurSunukjian]} showed that Fintushel-Stern's examples become diffeomorphic after a connected sum with a single copy of $S^2\times S^2$. Examples of $\Q$-homology 4-spheres $M$ that satisfy the hypothesis are spun 4-manifolds with the fundamental group of any lens space and the Poincar\'e homology 3-sphere. A similar result holds if (\ref{Stabilization}) is substituted for the non-trivial bundle $S^2\widetilde{\times} S^2$. It is possible to strengthen the conclusion of Theorem \ref{Theorem B} to topologically isotopic 2-spheres, although we do not pursue this endeavor here; see Sunukjian \cite{[Sunukjian]}. 

A contribution of this note is to point out the simplicity of the proofs of Theorem \ref{Theorem A} and Theorem \ref{Theorem B}. The reader will notice that the 4-manifolds in the last clause of Theorem \ref{Theorem B} are smoothly reducible (cf. \cite[Definition 10.1.17]{[GompfStipsicz]}), while those in the last clause of Theorem \ref{Theorem A} are not. We explain in Remark \ref{Remark Explanation} how an instance of Theorem \ref{Theorem B} implies the claims on the existence of the homeomorphism of pairs and the non-existence of the diffeomorphism of pairs of Theorem \ref{Theorem A}.  An independent proof of Theorem \ref{Theorem A} is given in Section \ref{Section Proof of Theorem A} as well. The following consequence of Theorem \ref{Theorem B} is another contribution. 

\begin{cor}\label{Corollary C}Let $G$ be a finite cyclic group or the icosahedral group\begin{center}$G = \langle g_1, g_2 : {g_1}^5 = (g_1g_2)^2 = {g_2}^3\rangle$.\end{center}There is an infinite set of smoothly embedded 2-spheres in $2\mathbb{CP}^2\#4\overline{\mathbb{CP}^2}$ that are pairwise topologically equivalent, yet pairwise smoothly inequivalent, and the fundamental group of the complement is $G$.
\end{cor}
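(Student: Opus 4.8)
The plan is to obtain Corollary \ref{Corollary C} as a direct application of Theorem \ref{Theorem B}, after supplying the two pieces of input data it requires. For the ambient manifold, take $Z = \mathbb{CP}^2\#3\overline{\mathbb{CP}^2}$: using the standard diffeomorphism $(S^2\times S^2)\#\overline{\mathbb{CP}^2}\cong\mathbb{CP}^2\#2\overline{\mathbb{CP}^2}$ one gets $Z\#S^2\times S^2\cong 2\mathbb{CP}^2\#4\overline{\mathbb{CP}^2}$. Since $3$ lies in the range $2\leq k\leq 7$, the constructions of Fintushel-Stern \cite{[FintushelStern2], [FintushelStern]} provide an infinite set $\{Z_n:n\in\Z\}$ of smooth $4$-manifolds, each homeomorphic to $Z$, with pairwise distinct invariants $S_n$ as in (\ref{Integer Invariant}), and Baykur-Sunukjian \cite{[BaykurSunukjian]} show that $Z_n\#S^2\times S^2$ is diffeomorphic to $Z\#S^2\times S^2$ for every $n$. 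Hence $\{Z_n\}$ meets the hypotheses of Theorem \ref{Theorem B}.

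For the $\Q$-homology $4$-sphere $M$ with $\pi_1 M = G$, use a spun $3$-manifold: when $G = \Z/p$ is finite cyclic take $M$ to be the spun lens space $S(L(p,q))$ (and $M = S^4$ if $p = 1$), and when $G$ is the binary icosahedral group take $M$ to be the spun Poincar\'e homology sphere. Spinning a closed $3$-manifold $Y$ does not change the fundamental group, so $\pi_1 M = \pi_1 Y = G$; and a Mayer-Vietoris computation shows $H_\ast(M;\Q)\cong H_\ast(S^4;\Q)$ since the relevant $Y$ is a $\Q$-homology $3$-sphere. These are exactly the examples of $\Q$-homology $4$-spheres mentioned after Theorem \ref{Theorem B}.

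The one nontrivial verification is that $M$ satisfies the stabilization condition (\ref{Stabilization}). I would start from a surgery presentation of $Y$ (the unknot with slope $p/q$ for $L(p,q)$; a trefoil with framing $\pm 1$ for the Poincar\'e sphere), push it through the spinning construction to produce an explicit handle diagram of $M$, and take $\alpha = \gamma\times\{\mathrm{pt}\}$ with $\gamma\subset Y$ a loop that normally generates $\pi_1 Y$. Such a $\gamma$ exists: $\pi_1 Y$ is cyclic in the lens-space case, and in the Poincar\'e case $G$ is perfect and its only nontrivial proper normal subgroup is its center $\Z/2$, so any non-central element normally generates $G$. Surgery on $\alpha$ with the framing compatible with the spin structure then yields a closed simply connected spin $4$-manifold with $b_2 = 2$, which by Rokhlin's theorem and Freedman's classification is homeomorphic to $S^2\times S^2$; the point is to see, by explicit handle cancellations in the diagram, that it is actually \emph{diffeomorphic} to $S^2\times S^2$. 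I expect this Kirby-calculus step to be the main obstacle; everything else is bookkeeping.

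Granting this, Theorem \ref{Theorem B} applied to the above $Z$ and $M$ produces an infinite set $\{S_{n,G}:n\in\Z\}$ of smoothly embedded $2$-spheres in $Z\#S^2\times S^2 = 2\mathbb{CP}^2\#4\overline{\mathbb{CP}^2}$, which are pairwise homeomorphic as pairs --- hence pairwise topologically equivalent --- admit a diffeomorphism of pairs precisely when the indices agree --- hence are pairwise smoothly inequivalent --- are homologically essential, and have complement with fundamental group $\pi = G$. This is the assertion of Corollary \ref{Corollary C}.
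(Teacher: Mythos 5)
Your route is the paper's route: apply Theorem \ref{Theorem B} with $Z=\mathbb{CP}^2\#3\overline{\mathbb{CP}^2}$, the Fintushel--Stern exotic family $\{Z_n\}$, Baykur--Sunukjian for $Z_n\#S^2\times S^2\cong 2\mathbb{CP}^2\#4\overline{\mathbb{CP}^2}$, and for $M$ the spun lens spaces and the spun Poincar\'e sphere from Section \ref{Section Homology 4-Spheres}. The one place where you and the paper diverge is exactly the step you flag as ``the main obstacle'': the verification of condition (\ref{Stabilization}), i.e.\ that surgery on $\alpha=\gamma\times\{\mathrm{pt}\}\subset M$ with the product framing yields $S^2\times S^2$ \emph{smoothly}. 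Your Rokhlin/Freedman argument only gives a homeomorphism, and you leave the diffeomorphism statement as a Kirby-calculus exercise you ``expect'' to be able to do. That is a genuine gap, not bookkeeping: the paper discharges it by quoting Lemma \ref{Lemma Spheres} (Sato \cite{[Sato]}), whose proof it attributes to Moishezon's argument \cite{[Moishezon]}, a lemma of Gompf \cite{[Gompf]}, and --- crucially --- Akbulut's theorem that Scharlemann's manifold is standard \cite{[Akbulut0]} (cf.\ Tange \cite{[Tange]}). For $G$ the binary icosahedral group, the claim that surgering the spun Poincar\'e sphere along such a loop gives the standard $S^2\times S^2$ is essentially equivalent to Akbulut's theorem, which resolved a problem that had been open for decades; it is not something one should plan to rederive by ad hoc handle cancellations.

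So: correct skeleton, correct choice of ingredients, but the proof is incomplete precisely at the single point where a nontrivial external input is required. To close it, replace your proposed Kirby-calculus verification with an appeal to Lemma \ref{Lemma Spheres} (equivalently, to \cite{[Sato]} for the cyclic case and to \cite{[Akbulut0]} for the binary icosahedral case). Two minor points: the paper works with $L(p,1)$ rather than general $L(p,q)$ (Plotnick's results are invoked to control the framing ambiguity in the spinning construction for the 3-manifolds actually used), and the trivial-group case $p=1$ is not part of the statement as the corollary is applied with $p\geq 2$.
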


These are the first examples of exotic embeddings of 2-spheres in simply connected 4-manifolds whose complement has a fundamental group isomorphic to the binary icosahedral group among several other choices of groups. We exhibit interesting smooth embeddings of nullhomotopic 2-spheres in the fourth main result of this note.

\begin{thm}\label{Theorem E}There is an infinite set\begin{equation}\label{Set Inf}\{S_{n}: n\in \Z\}\end{equation}of 2-spheres smoothly embedded in $2\mathbb{CP}^2\#4\overline{\mathbb{CP}^2}$ that satisfies the following properties.
\begin{itemize}
\item The fundamental group of the complement of an element in (\ref{Set Inf}) is\begin{center}$\pi_1(2\mathbb{CP}^2\#4\overline{\mathbb{CP}^2}\setminus \nu(S_{n})) = \Z$\end{center} and $[S_{n}]= 0\in \pi_2(2\mathbb{CP}^2\#4\overline{\mathbb{CP}^2})$ for every $n\in \Z$.
\item There is a diffeomorphism of pairs\begin{center}$(2\mathbb{CP}^2\#4\overline{\mathbb{CP}^2}, S_{n_1})\rightarrow (2\mathbb{CP}^2\#4\overline{\mathbb{CP}^2}, S_{n_2})$\end{center} if and only if $n_1 = n_2$.
\end{itemize}
\end{thm}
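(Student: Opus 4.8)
The plan is to produce each $S_n$ as the belt sphere of a surgery that undoes a member of an exotic family of $4$--manifolds with infinite cyclic fundamental group; the pairs will then be told apart by the closed manifolds obtained from surgery on the $S_n$. First I would isolate the general mechanism. Let $X'$ be a closed smooth $4$--manifold with $\pi_1(X')\cong\Z$, let $\alpha\subset X'$ be a smoothly embedded circle carrying a generator of $\pi_1(X')$, fix a framing of $\alpha$, and write $X=(X'\setminus\nu(\alpha))\cup_{S^1\times S^2}(D^2\times S^2)$ for the result of the surgery, with belt sphere $S=\{0\}\times S^2\subset D^2\times S^2$. Then $X\setminus\nu(S)=X'\setminus\nu(\alpha)$, and since deleting a tubular neighbourhood of a circle from a $4$--manifold does not change $\pi_1$, we get $\pi_1(X\setminus\nu(S))\cong\pi_1(X')\cong\Z$. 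Next, $[S]\in H_2(X;\Z)$ is the image of the linking $2$--sphere of $\alpha$; because $[\alpha]$ generates $H_1(X')\cong\Z$, Poincar\'e duality makes the intersection pairing $H_3(X')\times H_1(X')\to\Z$ unimodular, so a $3$--cycle Poincar\'e dual to $[\alpha]$ meets $\nu(\alpha)\cong S^1\times D^3$ in a single fibre $\{pt\}\times D^3$ and thereby exhibits the linking $2$--sphere as nullhomologous in $X'\setminus\nu(\alpha)$; a Mayer--Vietoris step then gives $[S]=0$ in $H_2(X;\Z)$, and if $X$ is simply connected the Hurewicz theorem upgrades this to $[S]=0\in\pi_2(X)$. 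Finally, a diffeomorphism of pairs $(X,S_{n_1})\to(X,S_{n_2})$ can be isotoped to carry $\nu(S_{n_1})$ onto $\nu(S_{n_2})$ respecting the product structures, hence restricts to a diffeomorphism of the complements compatible with the $D^3\times S^1$--cappings (every self-diffeomorphism of $S^2\times S^1$ extends over $D^3\times S^1$), and therefore extends to a diffeomorphism of the surgered manifolds $X'_{n_1}\to X'_{n_2}$.

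Second, I would feed in the exotic family. By Theorem \ref{Theorem Infinite Smooth Structures} there is an infinite set $\{X'_n:n\in\Z\}$ of pairwise non-diffeomorphic closed smooth $4$--manifolds, each with $\pi_1\cong\Z$ and with the intersection form of $2\mathbb{CP}^2\#4\overline{\mathbb{CP}^2}$, pairwise distinguished by the modified Seiberg--Witten invariant $\Sw'$. The Fintushel--Stern--type surgery along a torus that produces these smooth structures I would arrange disjoint from a circle $\alpha_n$ generating $\pi_1(X'_n)$, and positioned so that the torus acquires a vanishing cycle the moment the $D^2\times S^2$ of the $\alpha_n$--surgery is glued in; consequently surgery along $\alpha_n$ trivializes the modification and returns the \emph{standard} $2\mathbb{CP}^2\#4\overline{\mathbb{CP}^2}$. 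Taking $S_n\subset 2\mathbb{CP}^2\#4\overline{\mathbb{CP}^2}$ to be the belt sphere of this surgery and combining with the first paragraph finishes the proof: the complements have fundamental group $\Z$, the classes $[S_n]$ vanish in $\pi_2$, and a diffeomorphism of a pair would force $X'_{n_1}\cong X'_{n_2}$, hence $n_1=n_2$.

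The crux is the second step: exhibiting an exotic family with infinite cyclic fundamental group whose exotic phenomenon is confined to a region that a single surgery on a $\pi_1$--generating circle dissolves, while remaining detectable by $\Sw'$ before that surgery. This concentration-and-dissolution is exactly the content that the adaptation of the Fintushel--Stern constructions in Theorem \ref{Theorem Infinite Smooth Structures} is meant to deliver; once it is in hand, the homological bookkeeping certifying $[S_n]=0$, the computation of $\pi_1$ of the complement, and the extension of pair--diffeomorphisms across the surgery are all routine.
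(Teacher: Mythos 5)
Your proposal is correct and follows essentially the same route as the paper: realize each $S_n$ as the belt sphere of surgery on a $\pi_1$-generating circle in the exotic family $\{X_{T,S^1_b}(0/n)\}$ with infinite cyclic fundamental group (this is Theorem \ref{Theorem Infinite Smooth Structures Infinite}, the $p=0$ case, rather than Theorem \ref{Theorem Infinite Smooth Structures} which you cite), verify $[S_n]=0$ homologically and hence homotopically, and distinguish the spheres by surgering back to recover the pairwise non-diffeomorphic $X_{T,S^1_b}(0/n)$. The one step you defer --- that the ambient manifold is the standard $2\mathbb{CP}^2\#4\overline{\mathbb{CP}^2}$ --- is supplied in the paper by the Moishezon/Gompf vanishing-cycle argument together with Baykur--Sunukjian stabilization as in Proposition \ref{Proposition Diffeo Ambient}, which is precisely the ``concentration-and-dissolution'' mechanism you describe.
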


Notice that elements in (\ref{Set Inf}) do not bound a smoothly embedded 3-ball in $2\mathbb{CP}^2\#4\overline{\mathbb{CP}^2}$. The smoothly inequivalent embeddings of homotopically trivial 2-spheres of Theorem \ref{Theorem E} are related to a construction of an infinite set of closed smooth 4-manifolds with infinite cyclic fundamental group and the homology of the connected sum $2\mathbb{CP}^2\#4\overline{\mathbb{CP}^2}\#S^1\times S^3$, which is given in Theorem \ref{Theorem Infinite Smooth Structures Infinite}.

While any 2-sphere in a closed simply connected 4-manifold can be assumed to be regularly immersed,  Hambleton-Kreck \cite{[HambletonKreck]} and Lee-Wilczy\'nski \cite{[LeeWilczynski], [LeeWilczynski2]} completely characterized when a homology class of non-zero divisibility can be represented by a locally flat embedded 2-sphere. The fifth and last result to be mentioned in this introduction records the existence of a myriad of explicit examples of locally flat embedded 2-spheres in closed simply connected 4-manifolds whose exteriors are homotopy equivalent but not homeomorphic.

\begin{thm}\label{Theorem D}For every $p\geq2$, there is a locally flat embedded 2-sphere\begin{equation}\label{Locally Flat Sphere}S_p\subset \ast \mathbb{CP}^2\#\overline{\mathbb{CP}^2}\end{equation} whose complement has finite cyclic group $\Z/p$, and it is homotopy equivalent to the complement of a smoothly embedded 2-sphere\begin{equation}\label{Smoothly Embedded Sphere}S_p'\subset\mathbb{CP}^2\#\overline{\mathbb{CP}^2}.\end{equation} 
\end{thm}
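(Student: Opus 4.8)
The plan is to build the smooth sphere $S_p'$ by an explicit cut-and-paste, and then to obtain $S_p$ and its exterior from $S_p'$ by a topological surgery argument over the good group $\Z/p$, reading the jump in the Kirby--Siebenmann invariant off the additivity of $\Ks$ under gluing along a 3-manifold.

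\textbf{Step 1: the smooth model.} First I would realize $\mathbb{CP}^2\#\overline{\mathbb{CP}^2}$ as the Hirzebruch surface $\mathbb{F}_1\to S^2$, with fiber class $F$ (so $F\cdot F=0$ and $F$ is primitive), take $p$ disjoint fibers $F_1,\dots,F_p$, and tube them together along $p-1$ standard embedded annuli to obtain a smoothly embedded 2-sphere $S_p'$ with $[S_p']=pF\neq 0$, of divisibility $p$; in particular $H_1$ of its complement is $\Z/p$. A van Kampen computation — on the complement of the $p$ disjoint fibers, which is a $D^2$-bundle over the $p$-punctured sphere, with free fundamental group on the meridians $\mu_1,\dots,\mu_p$, together with the identifications of consecutive meridians forced by the tubes — then gives $\pi_1(\mathbb{CP}^2\#\overline{\mathbb{CP}^2}\setminus\nu(S_p'))=\Z/p$ (for $p=2$ one may instead take a smooth conic in $\mathbb{CP}^2\subset\mathbb{CP}^2\#\overline{\mathbb{CP}^2}$). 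I would then set $E_p:=\mathbb{CP}^2\#\overline{\mathbb{CP}^2}\setminus\nu(S_p')$: a compact smooth 4-manifold with $\partial E_p\cong S^2\times S^1$, $\pi_1E_p=\Z/p$, and $\Ks(E_p)=0$.

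\textbf{Step 2: the topological twin.} By Freedman's classification, $\ast\mathbb{CP}^2\#\overline{\mathbb{CP}^2}$ — the Chern manifold blown up once — is the unique closed topological 4-manifold with intersection form isometric to that of $\mathbb{CP}^2\#\overline{\mathbb{CP}^2}$ and $\Ks=1$, and it is homotopy equivalent to $\mathbb{CP}^2\#\overline{\mathbb{CP}^2}$. Since $\Z/p$ is good, topological surgery applies, and I would invoke the surgery-theoretic realization and classification of locally flat 2-spheres of square zero and divisibility $p$ with cyclic complement (Lee--Wilczy\'nski, Hambleton--Kreck): whether such a sphere exists, and the homotopy type of its exterior, depend only on the intersection form of the ambient manifold and the homology class, not on $\Ks$. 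Hence the class corresponding to $[S_p']$ is represented in $\ast\mathbb{CP}^2\#\overline{\mathbb{CP}^2}$ by a locally flat 2-sphere $S_p$ with $[S_p]\neq 0$ and $\pi_1(\ast\mathbb{CP}^2\#\overline{\mathbb{CP}^2}\setminus\nu(S_p))=\Z/p$, whose exterior $\hat E_p$ is homotopy equivalent to $E_p$. (The homotopy equivalence need not, and when $p$ is odd does not, restrict to a homeomorphism of the common boundary $S^2\times S^1$; the two exteriors are distinguished exactly by $\Ks$.) Since $[S_p]^2=0$, the normal bundle of $S_p$ is trivial, so $\nu(S_p)\cong S^2\times D^2$ with $\Ks=0$, and by additivity of $\Ks$ over the gluing $\ast\mathbb{CP}^2\#\overline{\mathbb{CP}^2}=\hat E_p\cup_{S^2\times S^1}(S^2\times D^2)$ one gets $\Ks(\hat E_p)=1\neq 0=\Ks(E_p)$. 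Thus $\hat E_p\simeq E_p$ while $\hat E_p\not\cong E_p$, which is the assertion of the theorem.

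\textbf{The main obstacle} will be this realization step: invoking the surgery-theoretic construction of the locally flat sphere $S_p$ in the non-smoothable manifold $\ast\mathbb{CP}^2\#\overline{\mathbb{CP}^2}$ and tracking the normal-invariant bookkeeping. The difference between the two exteriors is carried by a degree-two normal invariant whose surgery obstruction vanishes in $L_4(\Z[\Z/p])$ — it is 2-torsion, and $L_4(\Z[\Z/p])$ has no 2-torsion — so it is realized by a genuine homotopy equivalence; the delicate point is that this normal invariant actually changes the ambient $\Ks$, i.e.\ that capping off $\hat E_p$ yields the Chern manifold blown up rather than $\mathbb{CP}^2\#\overline{\mathbb{CP}^2}$, which is exactly where the computation of $w_2$ of the exterior and the Wu-formula expression for $\Ks$ of the capped-off manifold enter.
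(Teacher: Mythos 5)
Your Step 1 is fine: tubing together $p$ coherently oriented fibers of the Hirzebruch surface is a standard way to produce a smooth 2-sphere in the class $p(h-e)$ with complement having fundamental group $\Z/p$ (one slip: the complement of the $p$ fibers is an $S^2$-bundle, not a $D^2$-bundle, over the $p$-punctured sphere, though the $\pi_1$ computation is unaffected; also the conic alternative for $p=2$ puts you in a class of square $4$ rather than $0$, which changes the exterior). The genuine gap is in Step 2, and it sits exactly at the heart of the theorem. The Lee--Wilczy\'nski and Hambleton--Kreck results you invoke give existence of a locally flat sphere in a prescribed class with $\Z/p$ complement, and uniqueness up to ambient homeomorphism \emph{within a fixed ambient manifold}; they do not assert that the exterior of such a sphere in $\ast\mathbb{CP}^2\#\overline{\mathbb{CP}^2}$ is homotopy equivalent to the exterior of your particular smooth sphere in $\mathbb{CP}^2\#\overline{\mathbb{CP}^2}$. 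That cross-comparison between the two star-partners is precisely the content to be proved, and your justification for it (``the homotopy type of the exterior depends only on the intersection form and the homology class'') is an assertion, not an argument: a priori the two exteriors could differ, e.g., in the $w_2$-type of their universal covers or in their equivariant intersection forms. Your closing sketch (change the normal invariant of $E_p$ by the degree-two class, kill the surgery obstruction in $L_4(\Z[\Z/p])$, cap off and identify the result as $\ast\mathbb{CP}^2\#\overline{\mathbb{CP}^2}$ via a Wu-formula computation of $\Ks$) is the right shape of argument, but none of its ingredients are verified --- in particular the claim that $L_4(\Z[\Z/p])$ has no $2$-torsion needs care for even $p$, and the identification of the capped-off manifold as the star partner rather than the smooth one is left open. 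As written, the proof of the main claim is not there.

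For comparison, the paper takes a different and much shorter route that avoids this issue entirely: both spheres arise as belt spheres of surgery on a loop generating $\pi_1$ in a $\Q$-homology 4-sphere with $\pi_1=\Z/p$ and $w_2\neq 0$ --- once using the non-smoothable example $M_p$ of Hambleton--Kreck (which has $\Ks(M_p)\neq 0$), and once using a smooth handlebody example $\Sigma_{p,1}$. Freedman--Quinn identifies the two ambient simply connected manifolds as $\ast\mathbb{CP}^2\#\overline{\mathbb{CP}^2}$ and $\mathbb{CP}^2\#\overline{\mathbb{CP}^2}$ respectively, and the two sphere exteriors are the loop complements in $M_p$ and $\Sigma_{p,1}$, which are homotopy equivalent because the closed manifolds themselves are. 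If you want to salvage your approach, the cleanest fix is to replace the appeal to Lee--Wilczy\'nski in Step 2 by this surgery-on-a-loop construction of the locally flat sphere, or else to actually carry out the normal-invariant and $L$-group computation you outline.
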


Theorem \ref{Theorem D} is essentially derived from an existence result of non-smoothable $\Q$-homology 4-spheres due to Hambleton-Kreck \cite{[HambletonKreck2]}. Other interesting examples are found in Kasprowski-Lambert-Cole-Land-Lecuona  \cite{[KasprowskiLambertColeLandLecuona]}.

\subsection{Acknowledgements:}E-mail exchanges with Aru Ray, Bob Gompf, Danny Ruberman and Ian Hambleton have increased our understanding on the subject. We are grateful to them for patiently sharing their knowledge. We thank the anonymous referees for their careful reading of the manuscript and for their input, which helped us improve the manuscript. Del tega dela je nastal na Rogu in Metelkovi.

\section{Proofs}\label{Section Proofs} 

\subsection{Infinitely many inequivalent smooth structures}\label{Section Infinitely Many} Fintushel-Stern \cite[Theorem 1]{[FintushelStern]} showed that there is a nullhomologous 2-torus $T$ smoothly embedded in $\mathbb{CP}^2\#3\overline{\mathbb{CP}^2}$ such that performing surgeries on $T$ results in infinitely many inequivalent smooth structures on $\mathbb{CP}^2\# 3\overline{\mathbb{CP}^2}$. We point out that changing the coefficients of the torus surgery on $T$ introduces homotopically non-trivial loops to the resulting 4-manifold, and their procedure also yields infinitely many smooth structures on 4-manifolds with prescribed cyclic fundamental group. The latter will serve as raw material to construct the knotted 2-spheres. 

We introduce terminology to state the result and follow the notation in \cite[Section 3]{[FintushelStern]}. Let $T\subset X$ be a smoothly embedded 2-torus with trivial tubular neighborhood $\nu(T) = T^2\times D^2$. Let $\{a, b\}$ be loops in $T$ that form a symplectic basis of $\pi_1T = \Z^2$, and let $\{S^1_a, S^1_b\}$ be loops in $\partial\nu(T) = T^2\times \partial D^2 = T^2\times S^1$ that are homologous to $a$ and $b$, respectively.  The meridian of $T$ is denoted by $\mu_T$ and it is any curve in the same isotopy class of the curve $\{x\}\times \partial D^2\subset \partial \nu(T)$. The smooth 4-manifold\begin{equation}\label{4-manifold torus surgery}X_{T, S^1_b}(p/n):= (X\setminus \nu(T))\cup_{\varphi} (T^2\times D^2)\end{equation}where the gluing diffeomorphism satisfies $\varphi_\ast([\partial D^2]) = n[S^1_b] + p[\mu_T]$ is said to be obtained by performing a $p/n$-torus surgery to $X$ on $T$ along the curve $b$.

We first consider the case of finite cyclic fundamental group and postpone the infinite cyclic case to the end of the section. 

\begin{theorem}\label{Theorem Infinite Smooth Structures}Fix $p\geq 2$. There is a smoothly embedded nullhomologous 2-torus $T\subset \mathbb{CP}^2\#3\overline{\mathbb{CP}^2}$ and a nullhomologous curve in its complement $S^1_b\subset \mathbb{CP}^2\#3\overline{\mathbb{CP}^2}\setminus \nu(T)$ such that performing a $(p/n)$-torus surgery to $\mathbb{CP}^2\#3\overline{\mathbb{CP}^2}$ on $T$ along $S^1_b$ yields an infinite set\begin{equation}\label{Infinite Set}\{X_{T, S^1_b}(p/n): n\in \Z\}\end{equation}of pairwise non-diffeomorphic 4-manifolds such that every element is homeomorphic to the connected sum\begin{equation}\label{Topological Prototype}\mathbb{CP}^2\#3\overline{\mathbb{CP}^2}\# \Sigma_p,\end{equation}where $\Sigma_p$ is a $\Q$-homology 4-sphere with fundamental group $\pi_1\Sigma_p = \Z/p$.
\end{theorem}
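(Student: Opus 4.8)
\emph{Proof outline.} The plan is to run the Fintushel--Stern construction of \cite[Theorem 1]{[FintushelStern]} without change except for the numerator of the surgery coefficient. Recall from that reference that there is a smoothly embedded nullhomologous torus $T\subset\mathbb{CP}^2\#3\overline{\mathbb{CP}^2}$ with trivial tubular neighbourhood, a symplectic basis $\{a,b\}$ of $\pi_1T$, and the curves $S^1_a,S^1_b\subset\partial\nu(T)$ as in the statement, such that $\pi_1(\mathbb{CP}^2\#3\overline{\mathbb{CP}^2}\setminus\nu(T))=\Z$ is generated by the meridian $\mu_T$ (so $S^1_b$ is nullhomologous, indeed nullhomotopic, in the exterior), $X_{T,S^1_b}(1/0)=\mathbb{CP}^2\#3\overline{\mathbb{CP}^2}$, and the closed $4$-manifolds $X_{T,S^1_b}(1/n)$, $n\in\Z$, are pairwise non-diffeomorphic copies of $\mathbb{CP}^2\#3\overline{\mathbb{CP}^2}$ separated by their (modified, since $b_+=1$) Seiberg--Witten invariants. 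First I would perform the $(p/n)$-torus surgery of \eqref{4-manifold torus surgery} along $S^1_b$ on this same torus, with $n$ ranging over the integers coprime to $p$, which is exactly the range for which $n[S^1_b]+p[\mu_T]$ is primitive and the surgery is defined.

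Next I would pin down the homeomorphism type. Since the exterior has $\pi_1=\Z\langle\mu_T\rangle$ and $S^1_b$ is nullhomologous in it, the gluing $\varphi$ imposes the single relation $\mu_T^{\,p}=1$, and van Kampen's theorem gives $\pi_1(X_{T,S^1_b}(p/n))=\Z/p$ for every admissible $n$. As $X_{T,S^1_b}(p/n)$ and $\mathbb{CP}^2\#3\overline{\mathbb{CP}^2}$ have the same exterior of $\nu(T)$, off which a generating set for $H_2$ is supported, the free part of $H_2(X_{T,S^1_b}(p/n);\Z)$ carries the odd, rank-$4$, signature-$(-2)$ intersection form of $\mathbb{CP}^2\#3\overline{\mathbb{CP}^2}$; with $b_1=0$ this gives $H_2\cong\Z^4\oplus\Z/p$, and the manifold is orientable, non-spin, and Kirby--Siebenmann-trivial. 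None of these data depend on $n$, so the classification of closed topological $4$-manifolds with finite cyclic fundamental group (Freedman; Hambleton--Kreck) identifies all the $X_{T,S^1_b}(p/n)$ with one and the same connected sum $\mathbb{CP}^2\#3\overline{\mathbb{CP}^2}\#\Sigma_p$, where $\Sigma_p$ may be taken to be a spun lens space---a $\Q$-homology $4$-sphere with $\pi_1\Sigma_p=\Z/p$. The point needing a little care is that the secondary ($w_2$-type and linking-form) invariants of the classification agree on the two sides; this is routine once one notes that the $\mathbb{CP}^2$-summand makes the relevant invariant of the connected sum depend on $\pi_1$, the intersection form and $w_2$ alone.

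To see that the manifolds in \eqref{Infinite Set} are pairwise non-diffeomorphic I would invoke the Fintushel--Stern gluing formula for the modified Seiberg--Witten invariant under a torus surgery on a nullhomologous torus. Exactly as in the case $p=1$ treated in \cite{[FintushelStern]}, this formula writes $\Sw'_{X_{T,S^1_b}(p/n)}$ as an affine function of $n$ with nonzero linear part---in \emph{loc.\ cit.} the dependence on $n$ is made visible through the Alexander polynomial of an auxiliary $n$-twist knot, whose leading coefficient is $n$. Hence $n\mapsto\Sw'_{X_{T,S^1_b}(p/n)}$ is injective, and since $\Sw'$ is a diffeomorphism invariant the members of the family are pairwise non-diffeomorphic.

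The genuine work, and the step I expect to be the main obstacle, is the Seiberg--Witten bookkeeping: one must check that the small-perturbation invariant $\Sw'$ is well defined in this $b_+=1$ situation and that the surgery formula survives the passage to a fundamental group carrying torsion, where the $\Spin^c$ structures now come in $\Z/p$-orbits. This is carried out by precisely the methods of \cite{[FintushelStern]} and \cite[Section 2]{[FintushelParkStern]}, which already operate with $b_+=1$; the only new ingredient here is the elementary observation that the Fintushel--Stern mechanism is insensitive to replacing the numerator $1$ by $p\geq2$, the sole effect being to implant $\Z/p$ in the fundamental group, thereby changing the topological model from $\mathbb{CP}^2\#3\overline{\mathbb{CP}^2}$ to $\mathbb{CP}^2\#3\overline{\mathbb{CP}^2}\#\Sigma_p$.
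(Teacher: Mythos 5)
Your strategy coincides with the paper's: keep the Fintushel--Stern torus $T\subset\mathbb{CP}^2\#3\overline{\mathbb{CP}^2}$ and the curve $S^1_b$, change only the numerator of the surgery coefficient from $1$ to $p$, compute $\pi_1=\Z/p$ by van Kampen, fix the homeomorphism type with Hambleton--Kreck \cite{[HambletonKreck2]}, and separate the smooth structures with the Seiberg--Witten product formula. The one step where you and the paper genuinely diverge, and where your version is thinner than what the literature directly supports, is the fundamental group computation. You assert outright that $\pi_1(\mathbb{CP}^2\#3\overline{\mathbb{CP}^2}\setminus\nu(T))=\Z$ is generated by $\mu_T$ and that $S^1_b$ is nullhomotopic in the exterior, and then read off $\Z/p$ from the single relation $\mu_T^p=1$. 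Alexander duality gives this on $H_1$, but the statement at the level of $\pi_1$ is not something you can simply ``recall'' from \cite{[FintushelStern]}: the torus $T$ there is assembled from a sequence of surgeries, and its complement's fundamental group is not described in that form. The paper avoids this by working in the six-torus picture: it performs the first five $|1/1|$-surgeries verbatim as in \cite[Theorem 2]{[FintushelStern2]}, changes only the sixth to a $p/n$-surgery, invokes the detailed relator computations of \cite[Section 5]{[BaldridgeKirk]} and \cite{[AkhmedovPark]} to get $\pi_1=\Z/p$, and only afterwards uses \cite[Section 8]{[FintushelStern]} (the first five surgeries do not change the diffeomorphism type) to collapse everything to a single $p/n$-surgery on one torus. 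If you want your shortcut to stand, you must either prove your claim about $\pi_1$ of the exterior of the composite torus or route the $\pi_1$ computation through the six-surgery description as the paper does.

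Two smaller points. First, your appeal to ``the Alexander polynomial of an auxiliary $n$-twist knot'' imports the knot-surgery formula, which is not the mechanism here; the paper uses the Morgan--Mrowka--Szab\'o product formula \cite[Theorem 1.1]{[MorganMrowkaSzabo]} together with Taubes' nonvanishing \cite{[Taubes]} for the symplectic $0/1$-surgered manifold and an adjunction argument to see that the linear-in-$n$ coefficient is $\pm1$. The structural conclusion (the invariant is affine in $n$ with nonzero linear part) is the same, but the justification you cite is the wrong one. Second, your observation that $n$ should be coprime to $p$ for the gluing class $n[S^1_b]+p[\mu_T]$ to be primitive is a fair point that the paper's indexing by $n\in\Z$ glosses over; it does not affect the conclusion, since infinitely many admissible $n$ remain.
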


\begin{proof} The only contribution in this note to the work of Fintushel-Stern \cite{[FintushelStern2], [FintushelStern]} that provides a proof of Theorem \ref{Theorem Infinite Smooth Structures} is the change in a coefficient of the torus surgery. We then employ a homeomorphism criteria of Hambleton-Kreck to pin down the homeomorphism class of the closed 4-manifolds that are constructed this way. Set $X:= \mathbb{CP}^2\# 3\overline{\mathbb{CP}^2}$ so to not overload the notation. Fintushel-Stern showed the existence of a nullhomologous torus $T\subset X$ \cite[Theorem 1.1]{[FintushelStern]} and the curve $b\subset T$ with framing $S^1_b\subset X\setminus \nu(T)$ as in the statement of Theorem \ref{Theorem Infinite Smooth Structures}. Build $X_{T, S^1}(p/n)$ as in (\ref{4-manifold torus surgery}). Since $[T] = 0\in H_2(X; \Z)$, we have that $H_1(X_{T, S^1_b}(p/n); \Z) = \Z/p$ for every $n\in \Z$; in this notation, $p = 0$ corresponds to $\Z$. We now fix $p\geq 2$.

To see that the fundamental group of $X_{T, S^1_b}(p/n)$ is $\Z/p$, we take a closer look at the constructions of Fintushel-Stern in \cite{[FintushelStern2], [FintushelStern]}, where six torus surgeries along six nullhomologous 2-tori $\{T_{1, i}, T_{2, i}: i = 1, 2 ,3\}$ are performed to $X$ to produce a symplectic 4-manifold $Q$ with fundamental group $\pi_1(Q) = \Z^6$ and that contains six Lagrangian 2-tori $\{L_{1, i}, L_{2, i}: i = 1, 2, 3\}$ \cite[Proposition 7]{[FintushelStern2]}, \cite[p. 77]{[FintushelStern]}. The complements of these 2-tori are the same, i.e.,\begin{equation}X\setminus \overset{3}{\underset{i = 1}\bigcup}\big(\nu(T_{1, i}) \sqcup \nu(T_{2, i})\big) = Q\setminus \overset{3}{\underset{i = 1}\bigcup}\big(\nu(L_{1, i}) \sqcup \nu(L_{2, i})\big).\end{equation}By applying six surgeries to the symplectic 4-manifold $Q$ along the Lagrangian 2-tori with a given choice of surgery curves \cite[Theorem 2]{[FintushelStern2]}, one obtains an infinite set of inequivalent smooth structures on $X$. The first five surgeries are $|1/1|$-torus surgeries, while the last one is a $1/n$-torus surgery \cite[p. 1685]{[FintushelStern2]}. In particular, this infinite set can be obtained by performing torus surgeries to $X$ on six nullhomologous 2-tori. For our purposes, we perform the first five surgeries verbatim as in the proof of \cite[Theorem 2]{[FintushelStern2]}, change the surgery coefficients of the sixth surgery to perform a $p/n$-torus surgery in order to obtain an infinite set\begin{equation}\label{Infinite Set New}\{X_{T, S^1_b}(p/n): n\in \Z\}\end{equation} for a fixed $p\geq 2$. It follows from the Seifert-van Kampen theorem that the fundamental group is $\pi_1(X_{T, S^1_b}(p/n)) = \Z/p$ \cite[p. 321]{[BaldridgeKirk]} for every $n\in \Z$: a detailed account on the computation of the fundamental group of the 4-manifolds obtained with such a change in the surgery coefficient can be found in several places in the literature, for example, \cite[Section 5]{[BaldridgeKirk]}, \cite[p. 595]{[AkhmedovPark]}. We have explained so far that six surgeries on six nullhomologous 2-tori in $X$ as in \cite[Theorem 2]{[FintushelStern2]} produce an infinite set (\ref{Infinite Set New}) of 4-manifolds with fundamental group $\Z/p$. 

We now appeal to the main result in \cite[Section 8]{[FintushelStern]}. Fintushel-Stern showed that the first five surgeries on $X$ do not change the diffeomorphism type of $X$ and, thus, that there is a single nullhomologous 2-torus $T\subset X$ along with a nullhomologous curve $S^1_b\subset X\setminus \nu(T)$ such that an $1/n$-torus surgery produce an infinite set of smooth structures on $X$, as we had mentioned before \cite[Theorem 1.1]{[FintushelStern]}. Thus, we conclude that each element in the set (\ref{Infinite Set New}) is obtained by performing a $p/n$-torus surgery on $T\subset X$ along $S^1_b$. 

We now argue that these 4-manifolds are homeomorphic to (\ref{Topological Prototype}). An inclusion-exclusion argument indicates that the Euler characteristic is unchanged under torus surgeries, i.e.,\begin{equation}\chi(X_{T, S^1_b}(p/n)) = \chi(X) = 6.\end{equation} Novikov additivity \cite[Remark 9.1.7]{[GompfStipsicz]} implies\begin{equation}\sigma(X_{T, S^1_b}(p/n)) = \sigma(X) = -2,\end{equation} and we conclude that the second Stiefel-Whitney class of $X_{T, S^1_b}(p/n)$ does not vanish employing a result of Rohklin cf. \cite[Theorem 1.2.29]{[GompfStipsicz]}. A classification result of Hambleton-Kreck \cite[Theorem C]{[HambletonKreck2]} allows us to conclude that the 4-manifold $X_{T, S^1_b}(p/n)$ is homeomorphic to $\mathbb{CP}^2\# 3\overline{\mathbb{CP}^2} \# \Sigma_p$, where $\Sigma_p$ is a closed smooth 4-manifold with Euler characteristic two and signature zero for every $n\in \Z$ and $p\geq 2$. 

To argue that we have constructed infinitely many 4-manifolds that are pairwise non-diffeomorphic, we compute their Seiberg-Witten invariants using an argument well-documented in the literature \cite{[AkhmedovBaykurPark], [BaldridgeKirk], [FintushelParkStern], [FintushelStern2], [FintushelStern]}. We reproduce the argument here for the sake of completeness, which requires us to describe the relation between the Seiberg-Witten invariants of the 4-manifolds $X_{T, S^1_b}(p/n)$, $X$ and $X_{T, S^1_b}(0/1)$. Given a characteristic element $k_0\in H_2(X_{T, S^1_b}(0/1); \Z)$, there are unique characteristic elements $k_X\in H_2(X_{T, S^1_b}(1/0); \Z) = H_2(X; \Z)$ and $k_{p/n}\in H_2(X_{T, S^1_b}(p/n); \Z)$ \cite[Remark 4]{[AkhmedovBaykurPark]}, \cite[p. 64]{[FintushelStern2]}. The 4-manifolds $X_{T, S^1_b}(1/0) = X$ and $X_{T, S^1_b}(0/1)$ will have at most one basic class up to sign in our setting; cf. \cite{[AkhmedovBaykurPark], [FintushelStern], [FintushelStern2]}. As described in \cite[Section 3]{[FintushelStern]}, the 4-manifold $X_{T, S^1_b}(0/1)$ has infinite cyclic fundamental group and it admits a symplectic structure \cite[Section 4]{[FintushelStern]} (cf. \cite[Section 3]{[FintushelParkStern]}). A result of Taubes \cite{[Taubes]} says that the canonical class $k_0 = - c_1(X_{T, S^1_b}(0/1))$ is a basic class of $X_{T, S^1_b}(0/1)$ and $\Sw_{X_{T, S^1_b}(0/1)}(\pm k_0) = \pm 1$. Moreover, the adjunction inequality (see \cite[Section 2.1]{[AkhmedovBaykurPark]}) implies that $k_0\in \mathcal{B}$ is the only basic class up to sign. 

It follows that there is a unique $k_{p/n}\in \mathcal{B}_{X_{T, S^1_b}(p/n)}$ up to sign for every $n\geq 1$, and the product formula of Morgan-Mrowka-Szabo \cite[Theorem 1.1]{[MorganMrowkaSzabo]} yields\begin{equation}\label{Formula}\Sw_{X_{T, S^1_b}(p/n)}(k_{p/n}) = p\cdot \Sw_X(k_X) + n\cdot \sum_{i} \Sw_{X_{T, S^1_b}(0/1)}(k_0 + i[T_0]).\end{equation}

There is a 2-torus $T_d\subset X_{T, S^1_b}(0/1)$ that is geometrically dual to the core 2-torus $T_0\subset X_{T, S^1_b}(0/1)$ of the surgery. Along with this fact, an adjunction inequality argument implies that the sum on the right handside of (\ref{Formula}) contains at most one non-vanishing term; see \cite[Section 4.1]{[AkhmedovBaykurPark]} for the argument. We have the equality\begin{equation} \Sw_{X_{T, S^1_b}(p/n)}(k_{p/n}) = p\cdot \Sw_X(k_X) + n\cdot \Sw_{X_{T, S^1_b}(0/1)}(k_0)\end{equation}and we conclude that there is an infinite set of pairwise non-diffeomorphic closed 4-manifolds (\ref{Infinite Set}). This concludes the proof of Theorem \ref{Theorem Infinite Smooth Structures}. 
\end{proof}

What is obtained when we set $p = 0$ in the statement of Theorem \ref{Theorem Infinite Smooth Structures} and the previous proof, is an infinite set $\{X_{T, S^1_b}(0/n): n\in \Z - \{0\}\}$ of pairwise non-diffeomorphic closed 4-manifolds with infinite cyclic fundamental group and the same homology of the connected sum $2\mathbb{CP}^2\#4\overline{\mathbb{CP}^2}\#S^1\times S^3$; cf. Fintushel-Stern \cite[Theorem 1.1]{[FintushelStern]}. 

\begin{theorem}\label{Theorem Infinite Smooth Structures Infinite}There is a smoothly embedded nullhomologous 2-torus $T\subset \mathbb{CP}^2\#3\overline{\mathbb{CP}^2}$  and a nullhomologous curve in its complement $S^1_b\subset \mathbb{CP}^2\#3\overline{\mathbb{CP}^2}\setminus \nu(T)$ such that performing a $(0/n)$-torus surgery to $\mathbb{CP}^2\#3\overline{\mathbb{CP}^2}$ on $T$ along $S^1_b$ yields an infinite set\begin{equation}\label{Infinite Set Infinite}\{X_{T, S^1_b}(0/n): n\in \Z-\{0\}\}\end{equation}of pairwise non-diffeomorphic 4-manifolds with infinite cyclic fundamental group and such that every element has the homology of the connected sum\begin{equation}\label{Topological Prototype Infinite}2\mathbb{CP}^2\#4\overline{\mathbb{CP}^2}\# S^1\times S^3.\end{equation}
\end{theorem}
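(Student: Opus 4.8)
The plan is to mimic the proof of Theorem \ref{Theorem Infinite Smooth Structures} essentially verbatim, tracking how the three invariants---fundamental group, homology, and Seiberg-Witten invariant---behave when the surgery coefficient $p$ is set to $0$. First I would invoke Fintushel-Stern \cite[Theorem 1.1]{[FintushelStern]} to obtain the same nullhomologous torus $T\subset X:=\mathbb{CP}^2\#3\overline{\mathbb{CP}^2}$ together with the nullhomologous surgery curve $S^1_b\subset X\setminus\nu(T)$, so that $X_{T, S^1_b}(0/n)$ is obtained by performing the five standard $|1/1|$-surgeries followed by a $0/n$-surgery on the six nullhomologous tori coming from the construction of $Q$; since the first five surgeries preserve the diffeomorphism type of $X$, each $X_{T, S^1_b}(0/n)$ is a single $0/n$-surgery on $T$ along $S^1_b$. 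The Seifert-van Kampen computation of \cite[p. 321]{[BaldridgeKirk]} now gives $\pi_1(X_{T, S^1_b}(0/n))=\Z$ for $n\neq 0$, exactly as the analogous computation gave $\Z/p$ before; the case $n=0$ is excluded because it would return $X$ itself.

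Next I would record the homological data. An inclusion-exclusion argument gives $\chi(X_{T, S^1_b}(0/n))=\chi(X)=6$ and Novikov additivity gives $\sigma(X_{T, S^1_b}(0/n))=\sigma(X)=-2$; together with $[T]=0\in H_2(X;\Z)$ and the fact that the surgery curve is nullhomologous, the Mayer-Vietoris sequence yields $H_1(X_{T, S^1_b}(0/n);\Z)=\Z$ and $H_2(X_{T, S^1_b}(0/n);\Z)\cong H_2(X;\Z)\oplus\Z$, so that the total Betti numbers and the intersection form match those of $2\mathbb{CP}^2\#4\overline{\mathbb{CP}^2}\# S^1\times S^3$. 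I would then note that the $S^1\times S^3$ summand is forced homologically by the appearance of the new $\Z$ in $H_1$ and the corresponding dual $\Z$ in $H_3$, so that the stated homology agrees with (\ref{Topological Prototype Infinite}).

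To distinguish the members of the set smoothly, I would reuse the Seiberg-Witten argument from the previous proof, noting that $X_{T, S^1_b}(0/1)$ is precisely the symplectic 4-manifold with infinite cyclic fundamental group appearing in \cite[Section 3]{[FintushelStern]}, whose canonical class $k_0=-c_1$ is by Taubes \cite{[Taubes]} a basic class with $\Sw_{X_{T, S^1_b}(0/1)}(\pm k_0)=\pm 1$, and it is the unique basic class up to sign by the adjunction inequality. The Morgan-Mrowka-Szabo product formula \cite[Theorem 1.1]{[MorganMrowkaSzabo]} then expresses $\Sw_{X_{T, S^1_b}(0/n)}$ in terms of $\Sw_X$ and $\Sw_{X_{T, S^1_b}(0/1)}$; since the geometrically dual torus $T_d$ to the core torus allows the adjunction-inequality reduction of \cite[Section 4.1]{[AkhmedovBaykurPark]} to collapse the sum to a single term, one gets a formula that is linear and non-constant in $n$, whence the $X_{T, S^1_b}(0/n)$ are pairwise non-diffeomorphic. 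A minor subtlety here is that with $b_1>0$ the relevant gauge-theoretic invariant must be taken to be the modified invariant $\Sw'$ of \cite[Section 2]{[FintushelParkStern]} rather than the ordinary Seiberg-Witten invariant, and one must also note that $b_+>1$ still holds after the surgery so the invariant is well defined.

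I expect the main obstacle to be the $n=0$ versus $n\neq 0$ bookkeeping together with the correct formulation of the Seiberg-Witten input in the presence of $b_1=1$: one must be careful that the product formula is applied in the form valid for manifolds with $b_1>0$, and that the ``at most one non-vanishing term'' reduction genuinely goes through with the dual torus $T_d$ present. Everything else is a direct transcription of the $p\geq 2$ argument with $p$ replaced by $0$, so the proof can be phrased quite briefly by pointing to the previous proof and indicating only the three places where the value of $p$ enters.
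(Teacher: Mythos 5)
Your proposal is correct and follows exactly the route the paper takes: the paper's entire ``proof'' of Theorem \ref{Theorem Infinite Smooth Structures Infinite} is the one-sentence remark preceding it, namely that one sets $p=0$ in the statement and proof of Theorem \ref{Theorem Infinite Smooth Structures}, which is precisely what you carry out (and your cautions about using the modified invariant $\Sw'$ when $b_1>0$ and about excluding $n=0$ are sensible additions the paper leaves implicit). One small slip: the surgery raises $b_2$ by two, not one, since both the core torus $T_0$ and its geometric dual $T_d$ become homologically essential, so $H_2(X_{T,S^1_b}(0/n);\Z)\cong H_2(X;\Z)\oplus\Z^2$; this is in fact what your own Euler characteristic count $\chi=6$ with $b_1=1$ forces, and what is needed to match $b_2=6$ for $2\mathbb{CP}^2\#4\overline{\mathbb{CP}^2}\#S^1\times S^3$.
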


Similar statements to Theorem \ref{Theorem Infinite Smooth Structures} and Theorem \ref{Theorem Infinite Smooth Structures Infinite} for further choices of homeomorphism types of 4-manifolds with cyclic fundamental group are produced by employing other results of Fintushel-Stern in \cite{[FintushelStern]}.

\subsection{2-spheres whose complement has a prescribed fundamental group}\label{Section Build Spheres} Let $X$ be a closed smooth 4-manifold whose fundamental group has a presentation\begin{equation}\pi_1X = \langle g_1, \ldots, g_j: r_1, \ldots, r_k\rangle\end{equation} such that adding the relation $g_1 = 1$ to it for a given generator $g_1$, one obtains the trivial group. Cyclic groups and the group $\langle g_1, g_2: {g_1}^5 = (g_1g_2)^2 = {g_2}^3\rangle$ are examples of such groups. 

Let $\alpha_{1}\subset X$ be a based loop whose homotopy class is $[\alpha_1] = g_1\in \pi_1X$. Build the closed smooth simply connected $4$-manifold \begin{equation}\label{Ambient General}Z:= X\setminus \nu(\alpha_1) \cup (D^2\times S^2)\end{equation}and consider the belt 2-sphere\begin{equation}\label{Belt Sphere General}S:= \{0\}\times S^2\subset D^2\times S^2\subset Z.\end{equation}Further information is needed on the framing of the loop $\alpha_1\subset X$ to pin down the diffeomorphism or homeomorphism type of $Z$. Once this is taken care of, this process provides a 2-sphere (\ref{Belt Sphere General}) smoothly embedded in $Z$ and whose complement has fundamental group $G$. A topological construction of locally flat 2-surfaces in topological $4$-manifolds is obtained by using locally flat embedded submanifolds in the surgery (\ref{Ambient General}); see \cite[Section 9.3]{[FreedmanQuinn]} for existence and uniqueness results on tubular neighborhoods of locally flat embedded submanifolds. 

We set some notation and construct the 2-spheres of Theorem \ref{Theorem A} using this procedure in the following example. It includes the choice of framing on the loop whose homotopy class generates the fundamental group. 

\begin{exa}\label{Example Theorem A}Fix $p\geq 2$ and an integer $n\in \Z$. Consider the 4-manifold $X_{T, S^1_b}(p/n)$ in the set (\ref{Infinite Set}) and let $\widehat{T}\subset X_{T, S^1_b}(p/n)$ be the core 2-torus of the surgery. Let $\alpha\subset X_{T, S^1_b}(p/n)$ be a loop such that the 4-manifold\begin{equation}\label{Ambient}Z_{n, p}:= (X_{T, S^1_b}(p/n)\setminus \nu(\alpha)) \cup (D^2\times S^2)\end{equation}is simply connected and consider the belt 2-sphere\begin{equation}\label{Belt Sphere}S_{n, p}:= \{0\}\times S^2\subset D^2\times S^2\subset Z_{n, p}.\end{equation}Notice that the loop $\alpha$ lies on the boundary of $\partial\nu(\widehat{T})$. The framing on the loop $\alpha$ is induced by the product framing of core torus of the $(p/n)$-torus surgery. The complement of the 2-sphere (\ref{Belt Sphere}) has fundamental group\begin{equation}\pi_1(Z_{n, p}\setminus \nu(S_n)) = \Z/p,\end{equation} and homology class of (\ref{Belt Sphere}) satisfies $[S_{n, p}]\neq 0\in H_2(Z_{n, p}; \Z)$. Moreover, the 4-manifold $X_{n, p}$ is recovered by applying surgery to $Z_{n, p}$ along $S_{n, p}$.

\end{exa}

\subsection{The ambient 4-manifold of Theorem \ref{Theorem A} and Theorem \ref{Theorem E}}\label{Section Diffeo Type} We prove in this section that the 2-spheres (\ref{Belt Sphere}) of Example \ref{Example Theorem A} are all smoothly embedded in $2\mathbb{CP}^2\#4\overline{\mathbb{CP}^2}$, and postpone to Section \ref{Section Proof of Theorem A} the proof that they are pairwise smoothly inequivalent.

\begin{proposition}\label{Proposition Diffeo Ambient}The 4-manifold $Z_{n, p}$ defined in (\ref{Ambient}) is diffeomorphic to the connected sum $2\mathbb{CP}^2\#4\overline{\mathbb{CP}^2}$ for every $n\in \Z$ and a fixed $p\geq 2$. In particular, there is an infinite set\begin{equation}\{S_{n, p}: n\in \Z\}\end{equation} of 2-spheres smoothly embedded in $Z = 2\mathbb{CP}^2\# 4\overline{\mathbb{CP}^2}$ such that the complement $Z\setminus \nu(S_{n, p})$  has fundamental group $\Z/p$ for every $n\in \Z$. 
\end{proposition}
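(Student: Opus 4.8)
The plan is to identify the 4-manifold $Z_{n,p}$ defined in (\ref{Ambient}) by computing its algebraic-topological invariants (fundamental group, Euler characteristic, signature, parity of the intersection form) and then invoking Freedman's classification together with the fact that the $S^2\times S^2$-surgery performed along the loop $\alpha$ is a smoothly trivial modification. The key observation is that $\alpha$ is chosen to lie on $\partial\nu(\widehat T)$ and is equipped with the framing induced by the product framing of the core torus $\widehat T$ of the $(p/n)$-torus surgery; this is precisely the framing that makes the surgery $(X_{T,S^1_b}(p/n)\setminus\nu(\alpha))\cup D^2\times S^2$ undo the effect of the $p/n$-surgery at the level of the fundamental group while simultaneously being recognizable as a standard connected-sum operation.

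First I would set $X:=\mathbb{CP}^2\#3\overline{\mathbb{CP}^2}$ and recall from Theorem \ref{Theorem Infinite Smooth Structures} that $X_{T,S^1_b}(p/n)$ has $\pi_1=\Z/p$, $\chi=6$, $\sigma=-2$ and non-spin type, being homeomorphic to $X\#\Sigma_p$. Removing a tubular neighborhood $\nu(\alpha)\cong S^1\times D^3$ of the loop $\alpha$ and regluing $D^2\times S^2$ along $S^1\times S^2=\partial(S^1\times D^3)=\partial(D^2\times S^2)$ kills the generator $g_1$ of $\pi_1$ (this is the content of the construction in Section \ref{Section Build Spheres}, applied with the cyclic group $\Z/p$ and its single generator), so $Z_{n,p}$ is simply connected. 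Next I would track the homology: this surgery replaces a class represented by $\alpha$ (torsion, since $H_1=\Z/p$) by trading a $1$-handle-like piece for a $2$-handle/$2$-sphere pair, raising $b_2$ by $2$ and adding a hyperbolic $S^2\times S^2$ summand's worth of homology — concretely one checks $\chi(Z_{n,p})=\chi(X_{T,S^1_b}(p/n))+2=8$ and $\sigma(Z_{n,p})=\sigma(X_{T,S^1_b}(p/n))=-2$ by Novikov additivity, since the glued-in $D^2\times S^2$ has zero signature. The parity: because the original manifold is non-spin and the surgery is along a torsion loop with a product framing, the new $2$-sphere $S_{n,p}$ has self-intersection zero but the form remains odd (one inherits the odd summands of $X$); hence the intersection form of $Z_{n,p}$ is odd, indefinite, of rank $6$ and signature $-2$, forcing it to be $2\langle 1\rangle\oplus 4\langle -1\rangle$.

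The cleaner route — and the one I would actually write — is to avoid a bare Freedman argument and instead show the surgery is \emph{smoothly} standard. Here is where the choice of framing does the work: the loop $\alpha$ together with its product framing bounds, inside $\nu(\widehat T)=T^2\times D^2$ (or rather in a collar of $\partial\nu(\widehat T)$), a cancelling structure, so that the operation $(\,\cdot\,\setminus\nu(\alpha))\cup D^2\times S^2$ is diffeomorphic to forming a connected sum with $S^2\times S^2$ \emph{and then} doing an inverse surgery that undoes the $1/n$ coefficient — equivalently, it reconnects the pieces of the Fintushel-Stern decomposition in the ``trivial'' way. Combining this with the Baykur-Sunukjian-type stabilization phenomenon (the Fintushel-Stern exotic structures on $X$ all become diffeomorphic to the standard one after a single $S^2\times S^2$ summand, cf. \cite{[BaykurSunukjian]}), one gets
\[
Z_{n,p}\;\cong\;X\#S^2\times S^2\;=\;\mathbb{CP}^2\#3\overline{\mathbb{CP}^2}\#S^2\times S^2\;\cong\;2\mathbb{CP}^2\#4\overline{\mathbb{CP}^2},
\]
where the last diffeomorphism is the standard fact that $\mathbb{CP}^2\#S^2\times S^2\cong 2\mathbb{CP}^2\#\overline{\mathbb{CP}^2}$ applied once. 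The ``in particular'' clause is then immediate: Example \ref{Example Theorem A} already records that $\pi_1(Z_{n,p}\setminus\nu(S_{n,p}))=\Z/p$ and that $[S_{n,p}]\neq 0$ in $H_2$, and under the diffeomorphism $Z_{n,p}\cong 2\mathbb{CP}^2\#4\overline{\mathbb{CP}^2}$ the spheres $S_{n,p}$ transport to the asserted infinite family in the fixed manifold $Z=2\mathbb{CP}^2\#4\overline{\mathbb{CP}^2}$.

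The main obstacle is making the second paragraph's identification \emph{smooth} rather than merely topological: one must verify carefully that the loop $\alpha$, with the product framing inherited from the core torus of the $(p/n)$-surgery, is isotopic to an unknotted loop in a $4$-ball inside $X_{T,S^1_b}(p/n)$ after a single stabilization — i.e.\ that the surgery genuinely produces a connected sum with $S^2\times S^2$ and not some twisted $S^2\widetilde\times S^2$ or a more exotic reglueing. This is where one invokes the specific geometry of the Fintushel-Stern construction (the surgery torus $\widehat T$ sits inside a standard piece, and $\alpha$ is a meridian-type curve there) together with the stabilization result of \cite{[BaykurSunukjian]}; once the smooth connected-sum picture is in hand, the homeomorphism/diffeomorphism bookkeeping is routine.
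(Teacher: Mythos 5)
Your third paragraph is the strategy the paper actually uses: recognize the surgery on $\alpha$ as a stabilization by $S^2\times S^2$ and then invoke Baykur--Sunukjian to dissolve the exotic summand after one stabilization. The first two paragraphs (the Freedman-style count of $\chi$, $\sigma$ and parity) are a detour you rightly abandon, since they can only ever produce a homeomorphism, not the claimed diffeomorphism.

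The gap is exactly where you locate it, and it is not closed by what you wrote. The assertion that $\alpha$ with its product framing ``bounds a cancelling structure,'' so that $(\,\cdot\,\setminus\nu(\alpha))\cup D^2\times S^2$ is a connected sum with $S^2\times S^2$, is the entire content of the step, and the paper supplies a named tool for it: Moishezon's argument \cite[Lemma 13]{[Moishezon]} in Gompf's formulation \cite[Lemma 3]{[Gompf]}. Concretely, one first performs the torus surgery on $X_{T,S^1_b}(p/n)$ that identifies the generator of $\pi_1$ with the normal disk to the core torus, obtaining a simply connected 4-manifold $\hat N$ which is itself a torus surgery on $\mathbb{CP}^2\#3\overline{\mathbb{CP}^2}$; Moishezon's lemma then identifies $Z_{n,p}$ with $\hat N\#S^2\times S^2$. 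Note that your phrasing suggests the loop surgery lands directly on $X\#S^2\times S^2$ with $X$ standard by ``reconnecting the pieces in the trivial way''; it does not. It lands on $\hat N\#S^2\times S^2$ with $\hat N$ a priori exotic, and only the Baykur--Sunukjian stabilization result \cite{[BaykurSunukjian]} (which you do cite) identifies $\hat N\#S^2\times S^2$ with $\mathbb{CP}^2\#3\overline{\mathbb{CP}^2}\#S^2\times S^2=2\mathbb{CP}^2\#4\overline{\mathbb{CP}^2}$. With Moishezon's lemma inserted at that point, your argument coincides with the paper's.
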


\begin{proof} We use an argument due to Moishezon \cite[Lemma 13]{[Moishezon]} (see Gompf's description too \cite[Lemma 3]{[Gompf]}) and work of Baykur-Sunukjian \cite{[BaykurSunukjian]} to establish the diffeomorphism type of our 4-manifolds. We follow the notation in \cite[Lemma 3]{[Gompf]}, fix an $n\in \Z$ and a $p\geq 2$, and consider the 4-manifold $X_{T, S^1_b}(p/n)$ in (\ref{Infinite Set}) that is constructed from $\mathbb{CP}^2\#3\overline{\mathbb{CP}^2}$ using torus surgeries and the 4-manifold $Z_{n, p}$ built in (\ref{Ambient}). Perform a torus surgery to $X_{T, b}(p/n)$ which identifies the loop that generates its fundamental group with the normal disk to the 2-torus to obtain a simply connected 4-manifold $\hat{N}$; this gluing map is described on \cite[page 101]{[Gompf]}. The latter 4-manifold can also be obtained by applying a torus surgery to $\mathbb{CP}^2\#3\overline{\mathbb{CP}^2}$. Moishezon's argument implies that $Z_{n, p}$ is obtained from $\hat{N}$ by doing surgery along a loop \cite[\S 5.2]{[GompfStipsicz]}, i.e., $Z_{n, p} = N^\ast =  \hat{N}\#S^2\times S^2$  \cite[Proposition 5.2.3, Proposition 5.2.4]{[GompfStipsicz]}. Results of Baykur-Sunukjian \cite[Section 3]{[BaykurSunukjian]} imply that $\hat{N}\#S^2\times S^2$ is diffeomorphic to $\mathbb{CP}^2\#3\overline{\mathbb{CP}^2}\#S^2\times S^2 = 2\mathbb{CP}^2\#4\overline{\mathbb{CP}^2}$. Since the choice of $n$ and $p$ was arbitrary, we conclude that $Z_{n, p}$ is diffeomorphic to $2\mathbb{CP}^2\#4\overline{\mathbb{CP}^2}$ for every $n\in \Z$ and $p\geq 2$.
\end{proof}

A tweak to the proof of Proposition \ref{Proposition Diffeo Ambient} pins down the diffeomorphism type of the 4-maniflds constructed in the proof of Theorem \ref{Theorem E}. 

\subsection{Topological isotopy}\label{Section TOPIsotopy}Locally-flat embeddings of 2-spheres in 4-manifolds whose complement has finite cyclic fundamental group have been studied by Lee-Wilczyn\'ski \cite{[LeeWilczynski]} and Hambleton-Kreck \cite[Theorem 4.5]{[HambletonKreck]}. The next result from their work is of particular importance for our purposes.

\begin{theorem}\label{Theorem LWHK}(Lee-Wilczy\'nski, Hambleton-Kreck). Let $X$ be a closed simply connected topological 4-manifold such that $b_2(X) > |\sigma(X)| + 2$ and let $h\in H_2(X; \Z)$ be a homology class of non-zero divisibility $p\neq 0$. Let $S_1, S_2\subset X$ be locally flat embedded 2-spheres with homology classes $[S_1] = [S_2] = h\in H_2(X; \Z)$, and whose complement has fundamental group $\pi_1(X\setminus \nu(S_1)) = \Z/p = \pi_1(X\setminus \nu(S_2))$ for $p\geq 2$. If\begin{equation}b_2(X) > \max_{0\leq j <p} |\sigma(X) - 2j(p - j)(1/p^2)h\cdot h|,\end{equation}then there is a topological isotopy between $S_1$ and $S_2$.
\end{theorem}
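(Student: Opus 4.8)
The plan is to obtain the topological isotopy from two ingredients: a homeomorphism between the two sphere exteriors that is the identity near the boundary, and Quinn's topological isotopy theorem for the closed simply connected $4$-manifold $X$. Write $C_i := X\setminus\nu(S_i)$. Since $S_i$ is a locally flat $2$-sphere with normal Euler number $h\cdot h$, the boundary $\partial C_i$ is the circle bundle over $S^2$ of that Euler number, equipped with a preferred parametrisation and the meridian $\mu_i$ generating the image of $\pi_1(\partial C_i)$ in $\pi_1(C_i)=\Z/p$. Any homeomorphism $\phi\colon C_1\to C_2$ which, under these parametrisations, is the identity on a collar of the boundary extends by the standard map on the $D^2$-bundle neighbourhoods to a homeomorphism of pairs $f\colon(X,S_1)\to(X,S_2)$. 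If $\phi$ is chosen to act trivially on homology — which can be arranged, since $[S_1]=[S_2]$ identifies the homology of the exteriors — then $f$ acts trivially on $H_2(X;\Z)$, so Quinn's isotopy theorem gives an isotopy $f_t$ of $X$ from $f$ to $\id_X$; then $\{f_t(S_1)\}$ is an ambient isotopy carrying $S_2=f_0(S_1)$ to $S_1=f_1(S_1)$. So the entire problem reduces to producing $\phi$ rel boundary, respecting the meridian.

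I would attack that via topological surgery, which is legitimate here because finite groups, in particular $\Z/p$, are good in the sense of Freedman--Quinn. First, a homotopy-theoretic step: using $\pi_1 C_i=\Z/p$, the homology sequence of $(X,C_i)$, Poincar\'e--Lefschetz duality and the linking form, one checks that $C_1$ and $C_2$ are simple homotopy equivalent rel $\partial$, i.e.\ one produces a degree-one normal map $C_2\to C_1$ rel boundary. Its surgery obstruction lies in $L_4(\Z[\Z/p])$, and the set of homeomorphism-rel-$\partial$ types within the homotopy type is governed by the structure set $\mathcal{S}^{\mathrm{TOP}}(C_1,\partial C_1)$ in the surgery exact sequence, with normal invariants in $[C_1/\partial C_1,\,G/\mathrm{TOP}]$. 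This is exactly where the numerical hypotheses enter: the inequalities $b_2(X)>|\sigma(X)|+2$ and $b_2(X)>\max_{0\le j<p}|\sigma(X)-2j(p-j)(1/p^2)h\cdot h|$ guarantee that the intersection forms of $C_1,C_2$, of the closed $4$-manifolds obtained from them by regluing standard pieces, and of the pieces of the $p$-fold cyclic branched cover of $(X,S)$ — which exists because $h$ has divisibility $p$, and whose eigenspace signatures are precisely the quantities $\sigma(X)-2j(p-j)(1/p^2)h\cdot h$ in the statement — are all indefinite with enough hyperbolic summands.

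With that slack one can realise any prescribed element of $L_4(\Z[\Z/p])$ by a self-equivalence of $C_1$ (Wall realisation) so as to kill the surgery obstruction, and then invoke the Hambleton--Kreck classification of closed topological $4$-manifolds with finite cyclic fundamental group, together with Freedman's work, to conclude that the resulting manifolds, and hence $C_1$ and $C_2$ rel boundary, are homeomorphic. Feeding this back into the first paragraph finishes the argument.

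The main obstacle is the passage in the previous paragraph from ``the obstruction lives in a group'' to an actual homeomorphism $\phi$ rel boundary respecting the meridian. The cleanest route, which is essentially the Hambleton--Kreck ``sum-stable'' argument, is to first show that $C_1$ and $C_2$ become homeomorphic rel boundary after a single connected sum with $S^2\times S^2$ — where the $L$-theoretic obstructions are automatically realised and the closed-manifold classification is at its easiest — and then to destabilise using precisely the indefiniteness supplied by the hypotheses. One must also be careful that the extension over the tubular neighbourhoods preserves all of the boundary framing data, so that the glued-up $f$ is genuinely a homeomorphism of pairs acting trivially on $H_2(X;\Z)$, which is what makes Quinn's theorem applicable.
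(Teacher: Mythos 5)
This statement is not proved in the paper at all: it is imported verbatim as a known result, with the proof residing in Lee--Wilczy\'nski \cite{[LeeWilczynski]} and Hambleton--Kreck \cite[Theorem 4.5]{[HambletonKreck]}. So there is no in-paper argument to compare yours against; the right benchmark is the cited literature. Measured against that, your outline identifies the correct architecture: reduce the isotopy statement to a homeomorphism of pairs inducing the identity on $H_2(X;\Z)$ and then invoke Quinn's theorem that such a self-homeomorphism of a closed simply connected topological $4$-manifold is topologically isotopic to the identity; produce the homeomorphism of exteriors rel boundary by surgery over the good group $\Z/p$; and you correctly locate the source of the numerical hypotheses, namely the $G$-signature computation of the eigenspace signatures $\sigma(X)-2j(p-j)(1/p^2)h\cdot h$ of the $p$-fold cyclic branched cover and the indefiniteness needed for cancellation. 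This is genuinely the Hambleton--Kreck route (stable classification plus cancellation of hyperbolic forms), and the last paragraph of your proposal shows you know where the real work sits.

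That said, as a standalone proof the proposal has two substantive gaps, both of which are precisely the content of the cited papers rather than routine verifications. First, the sentence asserting that $C_1$ and $C_2$ are simple homotopy equivalent rel boundary ``by the homology sequence, duality and the linking form'' is doing an enormous amount of unjustified work: for $4$-dimensional complements with $\pi_1=\Z/p$ the homotopy type rel boundary is controlled by the equivariant intersection form and the quadratic $2$-type, and establishing that these agree (and that the equivalence can be taken simple and compatible with the meridians) is a theorem, not an observation; Hambleton--Kreck's modified surgery is designed exactly to bypass the need to produce this homotopy equivalence first. Second, the destabilization step --- passing from a homeomorphism after one connected sum with $S^2\times S^2$ back to a homeomorphism of $C_1$ and $C_2$ themselves --- is the cancellation theorem for quadratic forms over $\Z[\Z/p]$, and the claim that the two displayed inequalities supply ``enough hyperbolic summands'' for it needs an actual argument relating $b_2(X)$, $\sigma(X)$ and the branched-cover eigenspace data to the hypotheses of that cancellation theorem. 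Neither gap indicates a wrong approach, but both would have to be filled by importing the main theorems of \cite{[HambletonKreck]} and \cite{[LeeWilczynski]}, which is effectively what the paper under review does by stating the result as a citation.
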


Notice that our ambient 4-manifold $2\mathbb{CP}^2\#4\overline{\mathbb{CP}^2}$ is within the range of the hypothesis of Theorem \ref{Theorem LWHK}. Moreover, the homology class of the belt 2-sphere (\ref{Belt Sphere}) of Example \ref{Example Theorem A} has non-zero divisibility and self-intersection zero by construction. We conclude that the 2-spheres that were constructed in the previous sections are all topologically isotopic to each other by Theorem \ref{Theorem LWHK}.

\begin{corollary}\label{Corollary Topologically Isotopic}The infinite set $\{S_{n, p}: n\in \Z\}$ of Proposition \ref{Proposition Diffeo Ambient} is made of smoothly embedded 2-spheres in $2\mathbb{CP}^2\#4\overline{\mathbb{CP}^2}$ that are pairwise topologically isotopic.

\end{corollary}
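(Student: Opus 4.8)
The plan is to assemble the three ingredients already in place: the diffeomorphism-type computation of Proposition \ref{Proposition Diffeo Ambient}, the homological data of the belt 2-sphere from Example \ref{Example Theorem A}, and the uniqueness-up-to-topological-isotopy statement of Theorem \ref{Theorem LWHK}. First I would record that by Proposition \ref{Proposition Diffeo Ambient} every $S_{n,p}$ sits inside the fixed ambient manifold $X := 2\mathbb{CP}^2\#4\overline{\mathbb{CP}^2}$, which is closed, simply connected and topological, and check it lies in the range of Theorem \ref{Theorem LWHK}: here $b_2(X)=6$ and $\sigma(X)=-2$, so $b_2(X)=6 > |\sigma(X)|+2 = 4$.

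Next I would pin down the homological invariants of the spheres. By construction in Example \ref{Example Theorem A} the class $h:=[S_{n,p}]\in H_2(X;\Z)$ is independent of $n$ (the belt 2-sphere $\{0\}\times S^2\subset D^2\times S^2$ glued in along the framed loop $\alpha$ depends on the framing, not on the surgery coefficient $n$), it has self-intersection $h\cdot h = 0$ since it is a belt sphere of a tubular neighbourhood, it is nonzero, and the fundamental group of its complement is $\Z/p$. In particular $h$ has divisibility $p$: the Mayer–Vietoris / half-lives-half-dies computation forces the meridian to generate $H_1$ of the complement, so the divisibility equals $p\geq 2$, matching the hypothesis of Theorem \ref{Theorem LWHK}. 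Then the remaining numerical condition reads $b_2(X)=6 > \max_{0\le j<p}|\sigma(X) - 2j(p-j)(1/p^2)\,h\cdot h| = \max_{0\le j<p}|\sigma(X)| = 2$, which holds; note the $h\cdot h=0$ makes this inequality trivial regardless of $p$.

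Having verified all hypotheses, for any $n_1,n_2\in\Z$ I apply Theorem \ref{Theorem LWHK} to the pair of locally flat (indeed smooth) embedded 2-spheres $S_{n_1,p},S_{n_2,p}\subset X$ with common homology class $h$ and complement group $\Z/p$, and conclude there is a topological isotopy between them. Since $n_1,n_2$ were arbitrary the set $\{S_{n,p}:n\in\Z\}$ is pairwise topologically isotopic, which is the assertion of Corollary \ref{Corollary Topologically Isotopic}.

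The only genuinely non-formal point — the step I expect to need the most care — is the claim that the homology class $h$ and its divisibility $p$ are the same for all $n$, i.e.\ that varying the surgery coefficient in $X_{T,S^1_b}(p/n)$ does not alter the $H_2$-class, the self-intersection, or the divisibility of the belt sphere $S_{n,p}$; once that identification is made, everything else is a direct invocation of Theorem \ref{Theorem LWHK} together with the arithmetic checks $6>4$ and $6>2$.
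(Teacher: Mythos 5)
Your proposal is correct and follows essentially the same route as the paper: the paper likewise verifies that $2\mathbb{CP}^2\#4\overline{\mathbb{CP}^2}$ satisfies $b_2 > |\sigma|+2$, that the belt spheres have self-intersection zero and non-zero divisibility with complement group $\Z/p$, and then invokes Theorem \ref{Theorem LWHK}. Your extra care about the classes $[S_{n,p}]$ agreeing for all $n$ (so that a single $h$ serves in Theorem \ref{Theorem LWHK}) is a point the paper leaves implicit, and is worth the attention you give it.
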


\subsection{Some examples of $\mathbb{F}$-homology $4$-spheres}\label{Section Homology 4-Spheres}Constructions of 4-manifolds that have the same $\mathbb{F}$-homology as $S^4$ are not scarce in the literature. For example,  a surgery theory construction of $\Q$-homology $4$-spheres with finite cyclic fundamental group is given by Hambleton-Kreck in \cite[Proposition 4.1]{[HambletonKreck2]}. Their examples include $4$-manifolds with non-vanishing Kirby-Siebermann invariant and they admit no smooth structure. We describe two constructions of such objects in this section. 

The first involves doing surgery on the product of a 3-manifold with a circle. Spun closed smooth 4-manifolds form a classical set of examples of 4-manifolds that share the homology of $S^4$ with $\mathbb{F}$-coefficients and whose fundamental group is a 3-manifold group. We briefly recall their construction and suppose that $N$ is a closed orientable 3-manifold. A homology $4$-sphere $\Sigma_N$ with fundamental group $\pi_1\Sigma_N = \pi_1N$ is constructed as\begin{equation}\label{Homology Sphere}\Sigma_N:= (N\times S^1)\setminus \nu(\{pt\}\times S^1) \cup_{\id} (D^2\times S^2),\end{equation}where we use the identity map to identify the common boundary. There is another choice of framing, yet results of Plotnick \cite{[Plotnick]} state that there is a unique diffeomorphism class of (\ref{Homology Sphere}) for the 3-manifolds employed in this paper.
If $N$ is an $\mathbb{F}$-homology 3-sphere, then $\Sigma_N$ is an $\mathbb{F}$-homology 4-sphere. There are two principal choices of 3-manifold used in the proofs of our results. 
\begin{itemize}
\item For $N = L(p, 1)$, we obtain a $\Q$-homology 4-sphere $\Sigma_{L(p, 1)}$ with fundamental group $\pi_1\Sigma_{L(p, 1)} = \Z/p$.
\item For $N = \Sigma(2, 3, 5)$, we obtain a $\Z$-homology $4$-sphere $\Sigma_{\Sigma(2, 3, 5)}$ with fundamental group $\pi_1\Sigma_{\Sigma(2, 3, 5)} = \langle a, b : a^5 = (ab)^2 = b^3\rangle$.
\end{itemize}

A second construction of smooth $\Q$-homology 4-spheres with finite cyclic fundamental group is through handlebodies. Gompf-Stipsicz's depiction of a pair of orientable $S^2$-bundles over $\R P^2$ in \cite[Figure 5.46]{[GompfStipsicz]} describes a handlebody of a pair of $\Q$-homology 4-spheres with fundamental group of order two whose second Stiefel-Whitney class can be chosen to vanish or not depending on the $n$-framing of one of the two 2-handles. Handlebodies of pairs of $\Q$-homology 4-spheres $\{\Sigma_{p, n}: n \in \{0, 1\}\}$ with fundamental group\begin{center}$\pi_1\Sigma_{p, n} = \Z/p$\end{center}for every $p\geq 2$ and second Stiefel-Whitney class\begin{center}$w_2\Sigma_{p, n} = n$\end{center} consisting of one 0-handle, one 1-handle, one 0-framed 2-handle, one $n$-framed 2-handle, one 3-handle, and one 4-handle are drawn as a straight-forward extension of the $p = 2$ case \cite[Figure 5.46]{[GompfStipsicz]}.

\subsection{2-spheres in simply connected 4-manifolds via $\mathbb{F}$-homology 4-spheres}\label{Section Q Homology Spheres} The 4-manifolds of the previous section and the procedure of Section \ref{Section Build Spheres} yields knotted 2-spheres smoothly embedded in the total space of an $S^2$-bundle over $S^2$. The case of most interest for us is summarized in the following lemma.

\begin{lemma}\label{Lemma Spheres} (Sato \cite[\S 3]{[Sato]}). There is a smoothly embedded 2-sphere $S_p\hookrightarrow S^2\times S^2$ whose complement has fundamental group $\Z/p$ for every $p\geq 2$.

There is a smoothly embedded 2-sphere $S_G\hookrightarrow S^2\times S^2$ whose complement has fundamental group $G = \langle a, b : a^5 = (ab)^2 = b^3\rangle$ or $\Z/p$.
\end{lemma}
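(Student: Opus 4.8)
The plan is to run the recipe of Section~\ref{Section Build Spheres} with the spun $\mathbb{F}$-homology $4$-sphere $\Sigma_N$ of (\ref{Homology Sphere}) playing the role of the ambient manifold, taking $N = L(p,1)$ for the cyclic case and $N = \Sigma(2,3,5)$ for the case $G = \langle g_1,g_2 : g_1^5 = (g_1g_2)^2 = g_2^3\rangle$; in the latter case one uses that $\pi_1\Sigma(2,3,5)$ is the binary icosahedral group, which admits exactly this presentation. Both $L(p,1)$ and $\Sigma(2,3,5)$ are $\mathbb{F}$-homology $3$-spheres, so $\Sigma_N$ is an $\mathbb{F}$-homology $4$-sphere; in particular $\chi(\Sigma_N) = 2$ and $\sigma(\Sigma_N) = 0$.

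First I would record that $\pi_1\Sigma_N = \pi_1 N$. This is Seifert--van Kampen applied to the decomposition $\Sigma_N = \bigl((N\setminus D^3)\times S^1\bigr)\cup_{S^1\times S^2}(D^2\times S^2)$ underlying (\ref{Homology Sphere}): the circle factor, which accounts for the extra $\Z$ in $\pi_1\bigl((N\setminus D^3)\times S^1\bigr)\cong\pi_1 N\times\Z$, is killed by the glued-in $D^2\times S^2$. Hence $\pi_1\Sigma_N$ is $\Z/p$ or $G$, and in either case it is normally generated by a single one of its generators $g_1$ --- precisely the hypothesis imposed on the ambient manifold in Section~\ref{Section Build Spheres}. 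Choosing a loop $\alpha_1\subset\Sigma_N$ in the class $g_1$ with the framing prescribed there, I then set $Z := \bigl(\Sigma_N\setminus\nu(\alpha_1)\bigr)\cup(D^2\times S^2)$ and let $S_p$ (respectively $S_G$) be the belt $2$-sphere $\{0\}\times S^2\subset D^2\times S^2\subset Z$ of (\ref{Belt Sphere General}). Two short van Kampen computations then finish the fundamental-group side: $Z$ is simply connected because the surgery kills the normal generator $[\alpha_1]$, while $Z\setminus\nu(S_p) = \Sigma_N\setminus\nu(\alpha_1)$ has $\pi_1 = \pi_1\Sigma_N = \pi_1 N$, since deleting a loop from a $4$-manifold does not alter $\pi_1$.

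The one substantive point is the identification $Z\cong S^2\times S^2$. An Euler characteristic and signature bookkeeping gives $\chi(Z) = 4$ and $\sigma(Z) = 0$, so $Z$ is a simply connected closed $4$-manifold with $b_2 = 2$ and indefinite intersection form; that form is the hyperbolic one (equivalently $w_2(Z) = 0$) precisely when $\alpha_1$ is surgered with the framing singled out in Sections~\ref{Section Build Spheres} and~\ref{Section Homology 4-Spheres}, and with that choice Freedman's classification \cite{[FreedmanQuinn]} identifies $Z$ with $S^2\times S^2$ topologically. To upgrade this to a diffeomorphism I would pass to an explicit handlebody, starting from the genus-one handle pictures of the $\mathbb{F}$-homology $4$-spheres recalled in Section~\ref{Section Homology 4-Spheres}: the surgery along $\alpha_1$ is the attachment of a $2$-handle along the core circle of the $1$-handle together with a canceling $3$-handle, after which a handle slide cancels the $1$-handle against the new $2$-handle and leaves the standard handlebody of $S^2\times S^2$. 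This is Sato's argument \cite[\S 3]{[Sato]}, which I would quote rather than reprove.

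I expect this last step to be the genuine obstacle: one must verify that the framing forcing $\pi_1$ to collapse is simultaneously the one that makes the intersection form even, and that the smooth manifold produced is the \emph{standard} $S^2\times S^2$ rather than $\mathbb{CP}^2\#\overline{\mathbb{CP}^2}$ or some would-be exotic copy. Everything before it --- the two fundamental-group computations and the characteristic-number bookkeeping --- is routine.
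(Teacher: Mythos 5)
Your construction is the one the paper intends: surger a loop generating $\pi_1$ out of an $\mathbb{F}$-homology $4$-sphere with the prescribed fundamental group and take the belt sphere, exactly as in Section~\ref{Section Build Spheres}. The two Seifert--van Kampen computations and the $\chi$, $\sigma$, $w_2$ bookkeeping are fine, and for the cyclic case your handle-cancellation argument is precisely the paper's second suggested proof ``by handlebodies'' --- though note that the genus-one pictures you invoke are the handlebody models $\Sigma_{p,0}$ of Section~\ref{Section Homology 4-Spheres}, not the spun manifolds $\Sigma_{L(p,1)}$ you started from, so you should either work with $\Sigma_{p,0}$ throughout or justify the substitution.

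The genuine gap is in the step you yourself flag as the obstacle, and it is concentrated in the binary icosahedral case. Sato's theorem concerns \emph{locally flat} $2$-knots in $S^2\times S^2$, so quoting it cannot deliver the smooth identification $Z\cong S^2\times S^2$; and the spun Poincar\'e sphere admits no genus-one handlebody in which the surgery curve is the core of a unique $1$-handle, so the handle-slide cancellation you describe does not apply to it. The smooth standardness of the manifold obtained by surgering the generator $a$ out of $\Sigma_{\Sigma(2,3,5)}$ is a hard theorem --- it is a relative of Akbulut's result that Scharlemann's manifold is standard --- and the paper's proof is exactly a variation of Proposition~\ref{Proposition Diffeo Ambient} that supplies this via Moishezon's argument, Gompf's \cite[Lemma 1.6]{[Gompf]} and Akbulut \cite{[Akbulut0]} (cf.\ Tange \cite{[Tange]}). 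Your write-up stops short of this input, and Freedman's classification only gives a homeomorphism. A minor further point: the framing of $\alpha_1$ has no effect on whether $\pi_1$ collapses (any framing kills $[\alpha_1]$); it only decides between $S^2\times S^2$ and $\mathbb{CP}^2\#\overline{\mathbb{CP}^2}$, so the ``simultaneity'' you worry about is not actually an issue.
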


A variation of the proof of Proposition \ref{Proposition Diffeo Ambient} yields a proof of Lemma \ref{Lemma Spheres} by using Moishezon's argument \cite{[Moishezon]}, a lemma of Gompf \cite[Lemma 1.6]{[Gompf]} and a result of Akbulut \cite[Theorem]{[Akbulut0]}; cf. Tange \cite{[Tange]}. Another proof of Lemma \ref{Lemma Spheres} is obtained by using handlebodies \cite{[Akbulut0], [Akbulut1], [GompfStipsicz]}.

\subsection{Proof of Theorem \ref{Theorem A}}\label{Section Proof of Theorem A} We collect the results of previous sections into a proof of the following theorem, which is equivalent to Theorem \ref{Theorem A}.

\begin{theorem}\label{Theorem 1}Fix $p\geq 2$. There is an infinite set\begin{equation}\label{Infinite 2-knots}\{S_{n, p}\subset 2\mathbb{CP}^2\# 4\overline{\mathbb{CP}^2}: n\in \Z\}\end{equation}made of topologically isotopic 2-spheres whose complement has fundamental group $\Z/p$, and for which doing surgery on each element yields the infinite set (\ref{Infinite Set}) of pairwise non-diffeomorphic smooth 4-manifolds in the homeomorphism class of $\mathbb{CP}^2\#3\overline{\mathbb{CP}^2}\# \Sigma_p$. 

In particular, there is a diffeomorphism of pairs\begin{equation}(2\mathbb{CP}^2\# 4\overline{\mathbb{CP}^2}, S_{n_1, p})\rightarrow (2\mathbb{CP}^2\# 4\overline{\mathbb{CP}^2}, S_{n_2, p})\end{equation} if and only if $n_1 = n_2$, and the infinite set (\ref{Infinite 2-knots}) consists of pairwise smoothly inequivalent  2-spheres.
\end{theorem}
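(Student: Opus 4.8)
The plan is to assemble the pieces established in the previous subsections. First I would invoke Proposition~\ref{Proposition Diffeo Ambient} to guarantee that each $Z_{n,p}$ is diffeomorphic to $2\mathbb{CP}^2\#4\overline{\mathbb{CP}^2}$, so that the family $\{S_{n,p}: n\in\Z\}$ of belt $2$-spheres from Example~\ref{Example Theorem A} genuinely lives inside a single $4$-manifold $Z=2\mathbb{CP}^2\#4\overline{\mathbb{CP}^2}$; by the same example the complement of each $S_{n,p}$ has fundamental group $\Z/p$ and $[S_{n,p}]\neq 0$ in $H_2$. Next I would recall from Example~\ref{Example Theorem A} that surgery on $Z_{n,p}$ along $S_{n,p}$ recovers $X_{T,S^1_b}(p/n)$, so the surgered manifolds are precisely the infinite set~(\ref{Infinite Set}) from Theorem~\ref{Theorem Infinite Smooth Structures}, which are pairwise homeomorphic to $\mathbb{CP}^2\#3\overline{\mathbb{CP}^2}\#\Sigma_p$ and pairwise non-diffeomorphic.

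The topological-isotopy clause is immediate from Corollary~\ref{Corollary Topologically Isotopic} (equivalently, Theorem~\ref{Theorem LWHK} applied to $2\mathbb{CP}^2\#4\overline{\mathbb{CP}^2}$, which lies in the stated range and for which $[S_{n,p}]$ has nonzero divisibility $p$ and square zero). For the smooth classification, the key observation is that a diffeomorphism of pairs $(Z,S_{n_1,p})\to(Z,S_{n_2,p})$ restricts to a diffeomorphism of the $2$-sphere complements, hence after filling back in the canonical way induces a diffeomorphism of the surgered manifolds $X_{T,S^1_b}(p/n_1)\to X_{T,S^1_b}(p/n_2)$; since these were shown to be pairwise non-diffeomorphic in Theorem~\ref{Theorem Infinite Smooth Structures}, we must have $n_1=n_2$. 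Conversely $n_1=n_2$ trivially gives the identity diffeomorphism. This simultaneously shows the $S_{n,p}$ are pairwise smoothly inequivalent, since any diffeomorphism of $Z$ carrying $S_{n_1,p}$ to $S_{n_2,p}$ would be in particular a diffeomorphism of pairs.

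I expect the only subtle point — and the step that deserves the most care — to be the reduction from "diffeomorphism of the complement $Z\setminus\nu(S_{n_1,p})$" to "diffeomorphism of the closed surgered $4$-manifold." One must check that the belt sphere $S_{n,p}$ has a canonical normal framing (it does: it is the belt sphere $\{0\}\times S^2\subset D^2\times S^2$, so its tubular neighborhood is canonically $D^2\times S^2$), so that the surgery filling $D^2\times S^2\rightsquigarrow S^2\times D^2$ is determined, and hence a diffeomorphism of complements extends across the surgery tori to yield a diffeomorphism of $X_{T,S^1_b}(p/n_i)$. Once this framing bookkeeping is in place the argument is purely formal, and the theorem follows by combining it with the non-diffeomorphism of the set~(\ref{Infinite Set}) and the topological isotopy of Corollary~\ref{Corollary Topologically Isotopic}.
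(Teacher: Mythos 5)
Your proposal follows essentially the same route as the paper: assemble Example~\ref{Example Theorem A}, Proposition~\ref{Proposition Diffeo Ambient}, and Corollary~\ref{Corollary Topologically Isotopic}, observe that surgery on $S_{n,p}$ recovers $X_{T,S^1_b}(p/n)$, and use the pairwise non-diffeomorphism of the set~(\ref{Infinite Set}) to rule out diffeomorphisms of pairs for $n_1\neq n_2$. The only quibble is notational: surgery on the belt $2$-sphere removes its tubular neighborhood $D^2\times S^2$ and caps off the $S^1\times S^2$ boundary with $S^1\times D^3$ (not $S^2\times D^2$), which is exactly the framing bookkeeping you correctly flag as the point needing care.
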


\begin{proof} The infinite set (\ref{Infinite 2-knots}) was constructed in Section \ref{Section Build Spheres}. The fundamental group of the complement of any 2-sphere is a prescribed finite cyclic group; see Example \ref{Example Theorem A}. Corollary \ref{Corollary Topologically Isotopic} says that elements in (\ref{Infinite 2-knots}) are pairwise topologically isotopic. As indicated in Example \ref{Example Theorem A}, the 4-manifold $X_{T, S^1_b}(p/n)$ in the infinite set (\ref{Infinite Set}) is obtained by carving out a tubular neighborhood $\nu(S_{n, p})$ of a 2-sphere in (\ref{Infinite 2-knots}) from $2\mathbb{CP}^2\#4\overline{\mathbb{CP}^2}$, and capping off the boundary with $S^1\times D^3$.  Given that the infinite set (\ref{Infinite Set}) is made of pairwise non-diffeomorphic 4-manifolds, we conclude that the infinite set (\ref{Infinite 2-knots}) is made of pairwise smoothly inequivalent 2-spheres.  
\end{proof}

\begin{remark}\label{Remark Proof}A minor modification to the previous argument yields a proof of Theorem \ref{Theorem E}. 

\end{remark}

\subsection{Proof of Theorem \ref{Theorem B}}Let $\{Z_n : n\in \Z\}$ be an infinite set of pairwise non-diffeomorphic 4-manifolds in the homeomorphism class of $Z$. Taking a connected sum with any $\Q$-homology 4-sphere $M$ yields an infinite set\begin{equation}\label{Infinite Set Manifolds}\{Z_n\# M: n\in \Z\}\end{equation}of reducible pairwise non-diffeomorphic 4-manifolds that are pairwise homeomorphic to $Z\# M$. The smooth structures are distinguished with the Seiberg-Witten invariant of the connected sums using the fact that $b_1(M) = 0 = b_2^+(M)$ and results of Kotschick-Morgan-Taubes \cite{[KotschickMorganTaubes]}. By hypothesis, there is a $\Sp^{\C}$-structure on $Z_n$ for which the Seiberg-Witten invariant $\Sw_{Z_n}$ is non-zero. As explained in \cite[Proof of Proposition 2]{[KotschickMorganTaubes]}, the $\Sp^{\C}$-structure can be extended to the connected sum $Z_n\# M$ and conclude that there is a $\Sp^{\C}$-structure for which $\Sw_{Z_n\#M} = \Sw_{Z_n}$. This implies that the infinite set $\{Z_n\#M: n\in \Z\}$ consists of pairwise non-diffeomorphic 4-manifolds that are pairwise homeomorphic to $Z\#M$.

We do surgery along the loop $\alpha\subset Z_n\# M$ as in the hypothesis of Theorem \ref{Theorem B} verbatim to the procedure described in Example \ref{Example Theorem A} to construct an infinite set\begin{equation}\label{Infinite Set Spheres Proof}\{S_{n, \pi}: n\in \Z, \pi = \pi_1M\}\end{equation} that are smoothly embedded in $Z\#S^2\times S^2$ and whose complement has fundamental group $\pi = \pi_1M$. By construction we obtain a homeomorphism of pairs between $(Z\#S^2\times S^2, S_{n_1, \pi})$ and $(Z\#S^2\times S^2, S_{n_2, \pi})$ for every $n_i\in \Z$. Surgery on the belt 2-sphere $S_{n, \pi}\subset Z\#S^2\times S^2$ gives us $Z_n\# M$ back. Since the infinite set (\ref{Infinite Set Manifolds}) is made of pairwise non-diffeomorphic 4-manifolds, we conclude that there is no diffeomorphism of pairs\begin{equation}\label{Map Pairs}(Z\#S^2\times S^2, S_{n_1, \pi})\rightarrow (Z\#S^2\times S^2, S_{n_2, \pi})\end{equation} if $n_1\neq n_2$.

\hfill $\square$

\begin{remark}\label{Remark Explanation} We elaborate on an argument to prove Theorem \ref{Theorem A} by using the construction procedure of Theorem \ref{Theorem B}. The ingredients that satisfy the hypothesis of the latter are the following. Take the infinite set $\{Z_n: n\in \Z\}$ of pairwise non-diffeomorphic 4-manifolds that are homeomorphic to $\mathbb{CP}^2\#3\overline{\mathbb{CP}^2}$ that was constructed by Fintushel-Stern in \cite{[FintushelStern]}. These 4-manifolds have different Seiberg-Witten invariant. A result of Baykur-Sunukjian \cite{[BaykurSunukjian]} implies that $Z_n\#S^2\times S^2$ is diffeomorphic to $2\mathbb{CP}^2\#4\overline{\mathbb{CP}^2}$ for every $n\in \Z$. As the 4-manifold $M$ in the statement of Theorem \ref{Theorem B}, use the $\Q$-homology 4-sphere $\Sigma_{p, 0}$ that was discussed in Section \ref{Section Q Homology Spheres} with $\pi_1\Sigma_{p, 0} = \Z/p$. Build the infinite set\begin{equation}\label{Set Elaborate}\{Z_n\#\Sigma_{p, 0}: n\in \Z\}\end{equation} of closed reducible 4-manifolds that are homeomorphic to $\mathbb{CP}^2\#3\overline{\mathbb{CP}^2}\#\Sigma_{p, 0}$. The set (\ref{Set Elaborate}) consists of pairwise non-diffeomorphic 4-manifolds, where the diffeomorphism classes are distinguished by their Seiberg-Witten invariants \cite[Proposition 2]{[KotschickMorganTaubes]}. Proceed as in the proof of Theorem \ref{Theorem B} and build an infinite set (\ref{Infinite Set Spheres Proof}) of pairwise smoothly inequivalent 2-spheres. These submanifolds have the required properties by construction and they are pairwise topologically isotopic by Theorem \ref{Theorem LWHK}.

\end{remark}

\subsection{Proof of Corollary \ref{Corollary C}}We check that the hypothesis of Theorem \ref{Theorem B} are met in these cases. As the infinite set $\{Z_n: n\in \Z\}$ we can take the infinite inequivalent smooth structures on $\mathbb{CP}^2\#3\overline{\mathbb{CP}^2}$ constructed by Fintushel-Stern \cite{[FintushelStern]}. Work of Baykur-Sunukjian \cite[Theorem ]{[BaykurSunukjian]} implies that $Z_n\#S^2\times S^2 = 2\mathbb{CP}^2\# 4\overline{\mathbb{CP}^2}$ for every $n\in \Z$; this connected sum is the simply connected 4-manifold in the statement of Corollary \ref{Corollary C}. The $\Q$-homology 4-spheres with the desired fundamental group were constructed in Section \ref{Section Homology 4-Spheres}; see Lemma \ref{Lemma Spheres}.

\hfill $\square$

\subsection{Proof of Theorem \ref{Theorem D}}Hambleton-Kreck \cite[Proposition 4.1]{[HambletonKreck2]} used surgery to prove the existence of a $\Q$-homology 4-sphere $M_p$ with non-zero second Stiefel-Whitney class $w_2(M_p)\neq 0$, non-vanishing Kirby-Siebenmann invariant $\Ks(M_p)\neq 0$, and fundamental group $\pi_1M_p = \Z/p$ for every $p\geq 2$. Carve out the loop in $M_p$ whose homotopy class generates the group $\pi_1M_p = \Z/p$, and glue back a locally flat copy of $D^2\times S^2$ to obtain a simply connected 4-manifold $\widehat{M}$ with Euler characteristic $\chi(\widehat{M}) = \chi(M_p) + 2 = 4$, signature $\sigma(\widehat{M}) = \sigma(M_p)$, second Stiefel-Whitney class $w_2(\widehat{M})\neq 0$ and Kirby-Siebenmann invariant $\Ks(\widehat{M})\neq 0$. A result of Freedman-Quinn \cite[Section 10.1]{[FreedmanQuinn]} states that $\widehat{M}$ is homeomorphic to the connected sum $\ast \mathbb{CP}^2\#\overline{\mathbb{CP}^2}$ of the Chern manifold and the complex projective space with the opposite orientation for every $p\geq 2$ (cf. \cite[Theorem 1.2.27]{[GompfStipsicz]}). The fundamental group of the complement of the belt 2-sphere $S_p$ of the surgery is isomorphic to $\pi_1M_p = \Z/p$. 

To produce the smoothly embedded 2-sphere $S_p'\subset \mathbb{CP}^2\#\overline{\mathbb{CP}^2}$ as in (\ref{Smoothly Embedded Sphere})  and prove the last clause of Theorem \ref{Theorem D}, perform surgery to the smooth $\Q$-homology 4-sphere $\Sigma_{p, 1}$ described in Section \ref{Section Q Homology Spheres}.

\hfill $\square$

\


\begin{thebibliography}{99}

\bibitem{[Akbulut0]} S. Akbulut, \emph{Scharlemann's manifold is standard}, Ann. of Math. 149 (1999), 497 - 510. 

\bibitem{[Akbulut1]} S. Akbulut, \emph{4-manifolds}, Oxford Graduate Texts in Mathematics, 25. Oxford University Press, 2016, vii + 263 pp.

\bibitem{[Akbulut2]} S. Akbulut, \emph{Isotoping 2-spheres in 4-manifolds}, in Proceedings of the 21st. G\"okova Geometry-Topology Conference 2014, 264 - 266, GGT, G\"okova, 2014.

\bibitem{[AkhmedovBaykurPark]} A. Akhmedov, R. I. Baykur and B. D. Park, \emph{Constructing infinitely many smooth structures on small 4-manifolds}, J. Topol. 1 (2008), 409 - 428.

\bibitem{[AkhmedovPark]} A. Akhmedov and B. D. Park, \emph{Exotic smooth structures on small 4-manifolds with odd signatures}, Invent. Math. 181 (2010), 577 - 603.

\bibitem{[AuckleyKimMelvinRuberman]} D. Auckly, H. J. Kim, P. Melvin and D. Ruberman, \emph{Stable isotopy in four dimensions}, Jour. London Math. Soc. 91 (2015), 439 - 463.

\bibitem{[AuckleyKimMelvinRubermanSchwartz]} D. Auckly, H. J. Kim, P. Melvin, D. Ruberman and H. Schwartz, \emph{Isotopy of surfaces in 4-manifolds after a single stabilization} Adv. in Math. 341 (2018), 609 - 615.

\bibitem{[BaldridgeKirk]} S. Baldridge and P. Kirk, \emph{Constructions of small symplectic 4-manifolds using Luttinger surgery}, J. Diff. Geom. 82 (2008), 919 - 940.

\bibitem{[BaykurSunukjian]} R. I. Baykur and N. Sunukjian, \emph{Round handles, logarithmic transforms and smooth 4-manifolds}, J. Topol. 6 (2013), 49 - 63.








\bibitem{[FintushelParkStern]} R. A. Fintushel, B. D. Park, and R. J. Stern, \emph{Reverse engineering small 4-manifolds}, Algeb. Geom. Topol. 7 (2007), 2103 - 2116.

\bibitem{[FintushelStern2]} R. A. Fintushel and R. J. Stern, \emph{Pinwheels and nullhomologous surgery on 4-manifolds with $b^+ = 1$}, Alg. Geom. Topol. 11 (2011), 1649 - 1699.

\bibitem{[FintushelStern]} R. A. Fintushel and R. J. Stern, \emph{Surgery on nullhomologous tori}, Geom. Topol. Monographs 18 (2012), 61 - 81.


\bibitem{[FreedmanQuinn]} M. H. Freedman and F. Quinn, \emph{Topology of 4-manifolds}, Princeton  Mathematical Series 39, Princeton University Press, Princeton, NJ, 1990, viii + 259 pp. 

\bibitem{[Gompf]} R. E. Gompf, \emph{Sums of elliptic surfaces}, J. Differential Geom. 34 (1991), 93 - 114.

\bibitem{[GompfStipsicz]} R. E. Gompf and A. I. Stipsicz, \emph{4-Manifolds and Kirby Calculus}, Graduate Studies in Mathematics, 20. Amer. Math. Soc., Providence, RI, 1999. xv + 557 pp.

\bibitem{[HambletonKreck]} I. Hambleton and M. Kreck, \emph{Cancellation of hyperbolic forms and topological four-manifolds}, J. Reine Angew. Math. 443 (1993), 21 - 47. 

\bibitem{[HambletonKreck2]} I. Hambleton and M. Kreck, \emph{Cancellation, elliptic surfaces and the topology of certain four-manifolds}, J. Reine Angew. Math. 444 (1993), 79 -100.




\bibitem{[KasprowskiLambertColeLandLecuona]} D. Kasprowski, P. Lambert-Cole, M. Land and A. G. Lecuona, \emph{Topologically flat embedded 2-spheres in specific simply connected 4-manifolds}, MATRIX Annals, (2019), 111 - 116. 

\bibitem{[Kim]} H. J. Kim, \emph{Modifying surfaces in 4-manifolds by twist spinning}, Geom. Topol. 10 (2006), 27 - 56.

\bibitem{[KimRuberman]} H. J. Kim and D. Ruberman, \emph{Smooth surfaces with non-simply-connected complements}, Alg. Geom. Topol. 8 (2008), 2263 - 2287.

\bibitem{[KotschickMorganTaubes]} D. Kotschick, J. W. Morgan, and C. H. Taubes, \emph{Four-manifolds without symplectic structures but with nontrivial Seiberg-Witten invariants}, Math. Res. Lett. 2 (1995), 119 - 124. 

\bibitem{[LeeWilczynski]} R. Lee and D. M. Wilczy\'nski, \emph{Locally flat 2-spheres in simply connected 4-manifolds}, Comment. Math. Helv. 6 (1990), 388 - 412.

\bibitem{[LeeWilczynski2]} R. Lee and D. M. Wilczy\'nski, \emph{Representing homology classes by locally flat surfaces of minimum genus}, Amer. J. Math. 119 (1997), 1119 - 1137. 






\bibitem{[MorganMrowkaSzabo]} J. W. Morgan, T. S. Mrowka and Z. Szab\'o, \emph{Product formulas along $T^3$ for Seiberg-Witten invariants}, Math. Res. Lett. 4 (1997), 915 - 929.

\bibitem{[Moishezon]} B. Moishezon, \emph{Complex surfaces and connected sums of complex projective planes}, Lect. Notes in Math., 603, Springer, Berlin, 1977. 


\bibitem{[Plotnick]} S. P. Plotnick, \emph{Equivariant intersection forms, knots in $S^4$, and rotations in 2-spheres}, Trans. Amer. Math. Soc. 296 (1986), 543 - 575. 






\bibitem{[Sato]} Y. Sato, \emph{Locally flat 2-knots in $S^2\times S^2$ with the same fundamental group}, Trans. Amer. Math. Soc. 323 (1991), 911 - 920.

\bibitem{[Schwartz]} H. Schwartz, \emph{Equivalent non-isotopic spheres in 4-manifolds}, J. Topol. 12 (2019), 1396 - 1412.

\bibitem{[Sunukjian]} N. S. Sunukjian, \emph{Surfaces in 4-manifolds: concordance, isotopy, and surgery}, Int. Math. Res. Not. 17 (2015), 7950 - 7978.

\bibitem{[Tange]} M. Tange, \emph{The link surgery of $S^2\times S^2$ and Scharlemann's manifolds}, Hiroshima Math. j. 44 (2014), 35 - 62. 

\bibitem{[Taubes]} C. H. Taubes, \emph{The Seiberg-Witten invariants and symplectic forms}, Math. Res. Lett. 1 (1994), 809 - 822.


\end{thebibliography}
\end{document}